\newtheorem{df}{Definition}
\newtheorem{thm}{Theorem}
\newtheorem{lem}{Lemma}
\newtheorem{claim}{Claim}
\newtheorem{remark}{Remark}
\newenvironment {proof} {\noindent{\em Proof.}}{\hspace*{\fill}$\Box$\par\vspace{4mm}}
\def\qed{\hfill \nopagebreak\rule{5pt}{8pt}}
\title{\bf Some Motzkin-Straus type results for non-uniform
hypergraphs\footnote{Supported by NSFC and the ¡°973¡± program. } }
\author{
\small  Ran Gu$^1$, Xueliang Li$^1$, Yuejian Peng$^2$, Yongtang Shi$^1$\\
\small $^1$Center for Combinatorics and LPMC-TJKLC \\
\small Nankai University, Tianjin 300071, China \\
\small Email: guran323@163.com, lxl@nankai.edu.cn, shi@nankai.edu.cn
\\
\small $^2$College of Mathematics, Hunan University,\\
\small Changsha 410082, Hunan Province,
P.R. China\\
\small Email: ypeng1@163.com
\date{}}
\begin{document}
\maketitle
\begin{abstract}
A remarkable connection between the order of a maximum clique and
the Lagrangian of a graph was established by Motzkin and Straus in
1965. This connection and its extensions were applied in Tur\'{a}n
problems of graphs and uniform hypergraphs. Very recently, the study
of Tur\'{a}n densities of non-uniform hypergraphs has been motivated
by extremal poset problems. In this paper, we give some
Motzkin-Straus type results for non-uniform hypergraphs.
\\[2mm]
\textbf{Keywords:}  Lagrangians of hypergraphs; Tur\'an problems; extremal problems\\
[2mm] \textbf{AMS Subject Classification (2010):} 05C65, 05D05
\end{abstract}

\section{Introduction}
In 1965, Motzkin and Straus \cite{MS} established a connection
between the order of a maximum clique and the Lagrangian of a graph,
which was used to give another proof of Tur\'{a}n's theorem. This
type of connection aroused interests in the study of Lagrangians of
uniform hypergraphs. Actually, the Lagrangian of a hypergraph has
been a useful tool in hypergraph extremal problems. Very recently,
the study of Tur\'{a}n densities of non-uniform hypergraphs has been
motivated by extremal poset problems; see \cite{GK, GL}. In this
paper, we intend to study the connection between the order of a
maximum clique and the Lagrangian of a non-uniform hypergraph.

A \emph{hypergraph} is a pair $H = (V,E)$ consisting of a vertex set
$V$ and an edge set $E$, where each edge is a subset of $V$. The set
$R(H)=\{|F|:F \in E\}$ is called the set of \emph{edge types} of
$H$. We also say that $H$ is an $R(H)$-graph. For example, if
$R(H)=\{1,3\}$, then we say that $H$ is a $\{1,3\}$-graph. If
all edges have the same cardinality $r$, then $H$ is an $r$-uniform
hypergraph, which is simply written as $r$-graph. A $2$-uniform
hypergraph is exactly a simple graph. A hypergraph is non-uniform
if it has at least two edge types. For any $r \in R(H)$, the
\emph{level hypergraph} $H^r$ is the hypergraph consisting of all
edges with $r$ vertices of $H$. We also use notation $E^r$ to denote
the set of all edges with $r$ vertices of $H$. We write $H_n^R$ for
a hypergraph $H$ on $n$ vertices with $R(H)=R$. For convenience, an
edge $\{i_1, i_2, \ldots, i_r\}$ in a hypergraph is simply written
as $i_1 i_2 \ldots i_r$ throughout the paper.

For an integer $n$, let $[n]$ denote the set $\{1, 2, \cdots, n\}$. The complete hypergraph $K_n^R$ is a hypergraph on  vertex set $[n]$  with edge set
$\bigcup\limits_{i \in R} {\left( {\begin{array}{*{20}{c}}
{[n]}\\
i
\end{array}} \right)}$. For example, $K_n^{\{ r\} }$ is the complete $r$-uniform
hypergraph on $n$ vertices.
$K_n^{[r]}$ is the non-uniform hypergraph with all possible edges of cardinality at
most $r$. Let $[n]^{R}$ represent the complete $R$-type hypergraph
on vertex set $[n]$. For example, $[n]^{\{1, 3\}}$ represents the complete $\{1, 3\}$-hypergraph
on vertex set $[n]$. We also let $[n]^{(r)}$ represent the complete $r$-uniform hypergraph
on vertex set $[n]$.

\begin{df}\label{def1}
For an $r$-uniform hypergraph $G$ with vertex set $\{1, 2,\cdots, n\}$, edge set $E(G)$
and a vector $\vec x = ({x_1}, \ldots ,{x_n}) \in {R^n}$, define
$$\lambda (G, \vec x) = \sum\limits_{{i_1}{i_2} \ldots {i_r} \in E(G)}
{{x_{{i_1}}}{x_{{i_2}}} \ldots {x_{{i_r}}}} {\rm{ }}.$$
\end{df}

\begin{df}\label{def2}
Let $S =\{\vec x = (x_1,\cdots, x_n) :
$ $\sum \limits_{i = 1}^n {{x_i}}  = 1,$ $ x_i \geq 0$ $ for$ $
i=1,2,\cdots,n\}$. The Lagrangian of $G$, denoted by $\lambda(G)$, is defined as
$$\lambda (G) = \max \{ \lambda (G,\vec x):\vec x \in S\}. $$
\end{df}

\noindent The value $x_i$ is called the weight of the vertex $i$ and
any vector $\vec x \in S$ is called a legal weighting. A weighting
$\vec y \in S$ is called an optimal weighting for $G$ if $\lambda(G,
\vec y)=\lambda(G)$.

Motzkin and Straus in \cite{MS} proved the following result for the
Lagrangian of a $2$-graph. It shows that the Lagrangian of a graph
is determined by the order of its maximum clique.

\begin{thm}\label{MS}\cite{MS}
If G is a $2$-graph in which a largest clique has order $t$, then,
\[\lambda (G) = \lambda \left( {{K_t}^{\{ 2\} }} \right) = \lambda
\left( {{{\left[ t \right]}^{(2)}}} \right) = \frac{1}{2}\left( {1 - \frac{1}{t}} \right).\]
\end{thm}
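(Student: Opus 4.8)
The plan is to prove the Motzkin–Straus theorem in two directions: a lower bound showing $\lambda(G) \geq \frac{1}{2}(1-\frac{1}{t})$, and an upper bound showing $\lambda(G) \leq \frac{1}{2}(1-\frac{1}{t})$. The lower bound is the easy half: since $G$ contains a clique $K_t$ on $t$ vertices, I would place weight $\frac{1}{t}$ on each vertex of that clique and weight $0$ elsewhere. This is a legal weighting, and evaluating $\lambda(G,\vec x)$ on it counts the $\binom{t}{2}$ edges inside the clique, each contributing $\frac{1}{t^2}$, which gives exactly $\binom{t}{2}\cdot\frac{1}{t^2} = \frac{1}{2}(1-\frac{1}{t})$. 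This simultaneously shows $\lambda(K_t^{\{2\}}) = \frac{1}{2}(1-\frac{1}{t})$ as a special case once the upper bound is in place.

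The upper bound is the substantive part, and the key tool is a shifting (or ``equalizing'') argument applied to an optimal weighting. First I would fix an optimal weighting $\vec y \in S$ with $\lambda(G,\vec y) = \lambda(G)$, and among all optimal weightings choose one whose support (the set of vertices with positive weight) is as small as possible. The central claim to establish is that the support of such a minimal optimal weighting must induce a clique. Suppose not: then there exist two vertices $i, j$ in the support with $y_i, y_j > 0$ but $ij \notin E(G)$. I would then compute the partial derivatives of $\lambda(G,\vec x)$ with respect to $x_i$ and $x_j$, namely $\partial_i \lambda = \sum_{ik \in E} x_k$ summed over neighbours $k$ of $i$. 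The optimality conditions (a Lagrange-multiplier argument, or a direct first-order comparison) force $\partial_i \lambda = \partial_j \lambda$ on the support.

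The decisive move is then to shift all the weight from one of these two non-adjacent vertices onto the other: replace $(y_i, y_j)$ by $(y_i + y_j, 0)$ or $(0, y_i + y_j)$. Because $i$ and $j$ are non-adjacent, no edge of $G$ contains both $i$ and $j$, so $\lambda(G,\vec x)$ is linear in $t$ along the segment $(y_i + t, y_j - t)$; hence moving to one endpoint does not decrease the Lagrangian. This produces a new optimal weighting with strictly smaller support, contradicting minimality. Therefore the support induces a clique, say on $s$ vertices, so $\lambda(G) = \lambda(K_s^{\{2\}})$. A short direct optimization (by convexity/symmetry, the maximum of $\sum_{i<j \leq s} x_i x_j$ subject to $\sum x_i = 1$ is attained at the uniform weighting) shows $\lambda(K_s^{\{2\}}) = \frac{1}{2}(1-\frac{1}{s})$, which is increasing in $s$ and maximized when $s = t$.

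The main obstacle I anticipate is making the first-order optimality and linearity argument watertight at the boundary of the simplex $S$: the maximum may lie on a face where some coordinates vanish, so a naive use of Lagrange multipliers needs care about which constraints $x_i \geq 0$ are active. The clean way around this is precisely the minimal-support reduction, which lets me work in the relative interior of the face spanned by the support, where the equality constraint $\sum x_i = 1$ is the only binding one and the derivative condition $\partial_i \lambda = \partial_j \lambda$ holds unambiguously. Once that reduction is set up, the linearity-along-the-edge observation does the rest with no real computation.
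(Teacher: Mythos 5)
Your proposal is correct, and it is essentially the classical Motzkin--Straus argument. Note that the paper itself offers no proof of this statement: Theorem~\ref{MS} is quoted directly from the 1965 paper of Motzkin and Straus \cite{MS}, so there is no in-paper proof to compare against. Your two halves are both sound: the lower bound by placing weight $1/t$ on the clique vertices, and the upper bound by taking an optimal weighting of minimal support and showing its support induces a clique --- if $i,j$ are non-adjacent with positive weights, then (since no edge contains both) $\lambda(G,\vec x)$ is affine along the segment $(y_i+\tau,\,y_j-\tau)$, so shifting all weight to one endpoint preserves optimality while shrinking the support, a contradiction; then $\lambda(K_s^{\{2\}})=\frac{1}{2}\left(1-\frac{1}{s}\right)$ via $\sum_{i<j}x_ix_j=\frac{1}{2}\left(1-\sum_i x_i^2\right)$ and monotonicity in $s\le t$ finishes it. One small remark: your invocation of the first-order condition $\partial_i\lambda=\partial_j\lambda$ is superfluous --- the affineness-along-the-edge observation alone carries the support-reduction step, as you yourself note at the end. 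Your argument is also consistent in spirit with the machinery the paper does use for its own theorems, namely Lemma~\ref{FR} of Frankl and R\"odl, whose part (b) is exactly your ``support induces a clique'' claim in the uniform setting.
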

\noindent This connection provided another proof of Tur\'an's
theorem. More generally, the connection between Lagrangians and
Tur\'an densities can be used to give another proof of the
fundamental result of Erd\"s-Stone-Simonovits on Tur\'an densities
of graphs; see Keevash's survey paper \cite{Keevash}. In 1980's,
Sidorenko \cite{Sidorenko} and Frankl and F\"uredi \cite{FF}
developed the method of applying Lagrangians in determining
hypergraph Tur\'an densities. More applications of Lagrangians can
be found in \cite{FF1, Keevash}. Recently, the study of Tur\'an
densities of non-uniform hypergraphs has been motivated by the study
of extremal poset problems \cite{GK,GL}. A generalization of the
concept of Tur\'an density to a non-uniform hypergraph was given in
\cite{JL}.

In  \cite{Peng2}, the authors studied the Lagrangian of a $3$-graph
and proved the following result.

\begin{thm}\label{Peng2}\cite{Peng2}
 Let $m$ and $t$ be positive integers satisfying
$\binom{t}{3}\leq m\leq \binom{t}{3}+\binom{t-1}{2}$. Let $G$ be a $3$-graph with $m$ edges and contain a clique of order $t$. Then,
\[\lambda (G) = \lambda \left( {{[t]}^{( 3) }} \right).\]
\end{thm}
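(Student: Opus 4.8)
The plan is to establish the two inequalities $\lambda(G)\ge\lambda\left([t]^{(3)}\right)$ and $\lambda(G)\le\lambda\left([t]^{(3)}\right)$ separately, the second being the substantial one. For the lower bound I would use monotonicity of the Lagrangian under taking subhypergraphs: since $G$ contains a clique of order $t$, it contains $[t]^{(3)}$ as a subhypergraph, and extending any weighting of $[t]^{(3)}$ by zeros yields a legal weighting of $G$ attaining the same value. Hence $\lambda(G)\ge\lambda\left([t]^{(3)}\right)$. (Recall that $\lambda\left([t]^{(3)}\right)$ is attained by the uniform weighting and equals $\binom{t}{3}/t^{3}=\frac{1}{6}\left(1-\frac{1}{t}\right)\left(1-\frac{2}{t}\right)$.)

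For the upper bound I would fix an optimal weighting $\vec x$ of $G$ chosen so that its support $S=\{i:x_i>0\}$ has minimum size, and record the first-order optimality conditions: writing $\partial_i\lambda(G,\vec x)=\sum_{\{j,k\}:\,ijk\in E}x_jx_k$, one has $\partial_i\lambda(G,\vec x)=3\lambda(G)$ for every $i\in S$ and $\partial_i\lambda(G,\vec x)\le 3\lambda(G)$ otherwise. I would then invoke the standard covering reduction: if some pair $\{i,j\}\subseteq S$ lies in no edge of $G$, then shifting all weight from $j$ to $i$ does not decrease $\lambda(G,\vec x)$ and strictly shrinks the support, contradicting minimality; so every pair of vertices of $S$ is contained in an edge of $G$. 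If $|S|\le t$, the conclusion is immediate, since only edges inside $S$ contribute and therefore $\lambda(G)=\lambda(G[S],\vec x|_S)\le\lambda\left(K_{|S|}^{\{3\}}\right)=\lambda\left([\,|S|\,]^{(3)}\right)\le\lambda\left([t]^{(3)}\right)$ by monotonicity.

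The remaining, and main, case is $|S|\ge t+1$, and this is exactly where the hypothesis $m\le\binom{t}{3}+\binom{t-1}{2}$ enters. Since all $\binom{t}{3}$ triples inside a fixed $t$-clique are already edges, at most $\binom{t-1}{2}$ edges of $G$ can meet the complement of that clique. The goal is to show that the optimal weight is in fact concentrated on some $t$-clique. For a vertex $v$ outside the clique I would bound $\partial_v\lambda(G,\vec x)=\lambda(L_v,\vec x)$, where $L_v$ is the link $2$-graph of $v$, using Theorem \ref{MS}: $\partial_v\lambda(G,\vec x)\le\frac{1}{2}\left(1-\frac{1}{c}\right)$, with $c$ the order of a largest clique of $L_v$. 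When $v$ lies in sufficiently few edges, so that $c$ is small, this is strictly smaller than $3\lambda(G)\ge 3\lambda\left([t]^{(3)}\right)=\frac{1}{2}\left(1-\frac{1}{t}\right)\left(1-\frac{2}{t}\right)$, which by the KKT condition forces $x_v=0$ and reduces the support.

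The delicate point, and the main obstacle, is the near-tight regime in which the $\binom{t-1}{2}$ extra edges make the link $L_v$ as dense as $K_{t-1}$ on clique vertices; then $v$ together with $t-1$ clique vertices spans a \emph{second} $t$-clique, the above bound no longer separates $\partial_v\lambda$ from $3\lambda(G)$, and one cannot simply delete $v$. Here I would argue instead that a covered support of size at least $t+1$ carrying only $\binom{t-1}{2}$ edges off any clique cannot sustain a weighting that beats the single-clique value; this is a Frankl--F\"uredi/Talbot-type optimality of the colexicographic hypergraph in precisely this edge range, and it guarantees that the extra edges never raise $\lambda(G)$ above $\lambda\left([t]^{(3)}\right)$. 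Since every $t$-clique yields the same value, the various possible supports all give $\lambda\left([t]^{(3)}\right)$, and combining this with the lower bound gives $\lambda(G)=\lambda\left([t]^{(3)}\right)$.
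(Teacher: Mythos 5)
First, a point of order: the paper you were asked to match does not actually prove this statement --- Theorem \ref{Peng2} is quoted verbatim from the reference \cite{Peng2} (Peng and Zhao), so there is no in-paper proof to compare against and your proposal must stand on its own. Its first three steps do: the monotonicity lower bound, the minimal-support optimal weighting with the Frankl--R\"odl first-order conditions (Lemma \ref{FR}), the covering property, and the disposal of the case $|S|\le t$ are all correct and standard. The problem is that the entire content of the theorem lies in the case $|S|\ge t+1$, and there your argument ends by invoking ``a Frankl--F\"uredi/Talbot-type optimality of the colexicographic hypergraph in precisely this edge range.'' No such citable theorem exists for this range. Talbot's theorem \cite{talbot} establishes colex optimality for $3$-graphs unconditionally only up to roughly $\binom{t}{3}+\binom{t-1}{2}-t$ edges, i.e.\ it stops on the order of $t$ edges short of the upper bound $\binom{t}{3}+\binom{t-1}{2}$ permitted here; for the remaining window the Frankl--F\"uredi conjecture was open. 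Closing that window under the additional hypothesis that $G$ contains a $t$-clique is exactly the contribution of Peng and Zhao --- that is, exactly the statement you are proving. So the crux of your proof is circular: it cites (a form of) the theorem itself.

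You also cannot rescue the argument with the link bound you set up, for the reason you yourself identify: a vertex $v$ outside the $t$-clique may lie in up to $\binom{t-1}{2}$ edges, so its link $2$-graph can contain a clique of order $t-1$, and then the Motzkin--Straus bound (Theorem \ref{MS}) gives $\partial_v\lambda(G,\vec x)\le\frac{1}{2}\left(1-\frac{1}{t-1}\right)$, which is \emph{larger} than $3\lambda\left([t]^{(3)}\right)=\frac{1}{2}\left(1-\frac{1}{t}\right)\left(1-\frac{2}{t}\right)$; the KKT comparison only eliminates vertices whose link clique number is below roughly $t/3$. What is genuinely needed in the case $|S|\ge t+1$ is a quantitative Talbot-style analysis: left-compression of the edge set, followed by estimates that play the weight of the last supported vertex against the number of edges meeting it, using the clique hypothesis and the bound $m\le\binom{t}{3}+\binom{t-1}{2}$ together to force that weight to zero. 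That several-page argument is the substance of Peng--Zhao's proof, and it is precisely what is missing from your proposal.
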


\noindent They pointed out that the upper bound
$\binom{t}{3}+\binom{t-1}{2}$ in this theorem is the best
possible. When $m = \binom{t}{3}+\binom{t-1}{2}+1$, let $H$ be the
$3$-graph with the vertex set $[t+1]$ and the edge set ${\left[ {t } \right]^{\left( 3 \right)}} \cup \left\{ {{i_1}{i_2}(t+1):{i_1}{i_2}
\in {{\left[ {t - 1} \right]}^{\left( 2 \right)}}} \right\} \cup
\left\{ {1t(t+1)} \right\}$. Take a legal weighting
$\vec x = (x_1,\cdots, x_n)$, where $x_1 = x_2 =\cdots= x_{t-1} =
\frac{1}{{t }}$ and $x_{t} = x_{t+1} = \frac{1}{{2t }}$. Then
$\lambda \left( {H} \right) \ge \lambda \left( {{H},\vec x} \right)
> \lambda \left( {{{\left[ {t } \right]}^{\left( 3 \right)}}}
\right)$.

Very recently, Peng et al. \cite{Peng1} introduced the Lagrangian of
a non-uniform hypergraph.

\begin{df}\label{def3}\cite{Peng1}
For a hypergraph $H_n^R$ and a vector $\vec x = ({x_1}, \ldots ,{x_n}) \in {R^n}$, define
\[\lambda'({H_n^R},\vec x){\rm{ = }}\sum\limits_{j \in R} {\left( {{j!}\sum\limits_{{i_1}{i_2} \ldots {i_j} \in {H^j}} {{x_{{i_1}}}{x_{{i_2}}} \ldots {x_{{i_j}}}} } \right)}. \]
\end{df}

\begin{df}\label{def4}\cite{Peng1}
Let $S =\{\vec x = (x_1,\cdots, x_n) :
$ $\sum \limits_{i = 1}^n {{x_i}}  = 1,$ $ x_i \geq 0$ $for$ $
i=1,2,\cdots,n\}$. The Lagrangian of $H_n^R$, denoted by $\lambda' (H_n^R)$, is defined as
\[\lambda' (H_n^R) = \max \{ \lambda' (H_n^R,  \vec x):\vec x \in S\}. \]
The value $x_i$ is called the weight of the vertex $i$ and any
vector $\vec x \in S$ is called a legal weighting. A weighting $\vec
y \in S$ is called an optimal weighting for $H$ if $\lambda'(H, \vec
y)=\lambda'(H)$.
\end{df}

\begin{remark}\label{rem1}
Consider the connection between Definition \ref{def2} and Definition
\ref{def4}. If $G$ is an $r$-uniform graph, then
$$\lambda'(G)=r!\lambda(G).$$
\end{remark}

In \cite{Peng1}, the authors proved the following generalization of
Motzkin-Straus result to $\{1,2\}$-graphs.

\begin{thm}\label{Peng1}\cite{Peng1}
If $H$ is a $\{1,2\}$-graph and the order of its maximum complete $\{1,2\}$-subgraph is $t$ (where $t\geq 2$), then,
\[\lambda' (H) = \lambda' \left( {{K_t}^{\{ 1,2\} }} \right) ={2 - \frac{1}{t}}.\]
\end{thm}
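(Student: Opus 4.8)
The plan is to prove the two inequalities $\lambda'(H)\ge 2-\frac1t$ and $\lambda'(H)\le 2-\frac1t$ separately, the target value being $\lambda'(K_t^{\{1,2\}})=2-\frac1t$, which one checks on the uniform weighting $x_i=\frac1t$ (then $\sum_i x_i=1$ and $2\sum_{i<j}x_ix_j=2\binom t2/t^2=1-\frac1t$). The lower bound is the easy direction: since $H$ contains $K_t^{\{1,2\}}$ as a subgraph, I would take an optimal weighting of $K_t^{\{1,2\}}$ and extend it by assigning weight $0$ to all remaining vertices of $H$; as every edge of $K_t^{\{1,2\}}$ is an edge of $H$ and all terms of $\lambda'$ are nonnegative, this yields $\lambda'(H)\ge\lambda'(K_t^{\{1,2\}})=2-\frac1t$. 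The whole difficulty is the upper bound.

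For the upper bound I would adapt the Motzkin--Straus strategy to the functional $\lambda'(H,\vec x)=\sum_{i\in E^1}x_i+2\sum_{ij\in E^2}x_ix_j$. First I would choose an optimal weighting $\vec x$ whose support $C=\{i:x_i>0\}$ has the smallest possible cardinality, and claim that $C$ is a clique of the level graph $H^2$. If not, pick $i,j\in C$ with $ij\notin E^2$. The key point is that, because $ij$ is not an edge, $\lambda'$ contains no term in $x_ix_j$, so the family obtained by replacing $x_i$ with $x_i+\epsilon$ and $x_j$ with $x_j-\epsilon$ changes $\lambda'$ only \emph{linearly} in $\epsilon$, with slope $\frac{\partial\lambda'}{\partial x_i}-\frac{\partial\lambda'}{\partial x_j}$. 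Since $\vec x$ is a maximum and $x_i,x_j>0$ permit $\epsilon$ of either sign, this slope must vanish; then moving all the weight of $j$ onto $i$ (taking $\epsilon=x_j$) gives another optimal weighting with strictly smaller support, contradicting minimality.

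Once $C$ is a clique, I would split it as $C=C_1\cup C_0$, where $C_1$ consists of the vertices of $C$ that are also singleton edges and $C_0=C\setminus C_1$. The crucial structural observation is that $C_1$ is a complete $\{1,2\}$-subgraph of $H$: all its pairs are edges (as $C$ is a clique) and all its singletons are edges (by definition of $C_1$); hence $p:=|C_1|\le t$. Using $2\sum_{i<j\in C}x_ix_j=1-\sum_{i\in C}x_i^2$ (valid because $C$ is a clique), the Lagrangian restricted to $C$ becomes $\lambda'(H,\vec x)=\sum_{i\in C_1}x_i+1-\sum_{i\in C}x_i^2$, a concave function on the simplex. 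I would bound it by fixing the total weight $u=\sum_{C_1}x_i$, spreading the weight on $C_1$ and on $C_0$ uniformly to minimize the square terms, and then optimizing over $u$; for $p\ge2$ the resulting one-variable concave bound is nondecreasing up to $u=1$, giving $\lambda'(H,\vec x)\le 2-\frac1p\le 2-\frac1t$. The residual cases $p\in\{0,1\}$ are handled by the crude estimates $1-\sum x_i^2\le 1$ and $x_{i_0}-x_{i_0}^2\le\frac14$, both strictly below $2-\frac1t$ once $t\ge2$.

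The step I expect to be the main obstacle is the clique claim together with the bookkeeping that identifies $C_1$ as a genuine complete $\{1,2\}$-subgraph. One must verify carefully that the absence of the term $x_ix_j$ really makes the perturbation exactly linear, so that the minimal-support argument closes, and one must feed the bound $p\le t$ (not a bound on $|C|$) into the final estimate, since a non-singleton vertex of $C$ contributes to the clique but not to a complete $\{1,2\}$-subgraph.
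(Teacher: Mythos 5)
Your proof is correct, but note that the paper itself contains no proof of this statement: Theorem \ref{Peng1} is quoted from the preprint \cite{Peng1}, so the comparison can only be with the methods the paper uses for its own theorems. Your route is the classical Motzkin--Straus argument adapted to the functional $\lambda'(H,\vec x)=\sum_{i\in E^1}x_i+2\sum_{ij\in E^2}x_ix_j$: take an optimal weighting of minimal support $C$, use the affine-perturbation/weight-shifting step (valid exactly because a missing pair $ij$ means there is no $x_ix_j$ term, so the slope must vanish and all of $x_j$'s weight can be moved onto $i$) to conclude that $C$ spans a clique of $H^2$, then split $C$ into the singleton-edge part $C_1$ (which is a complete $\{1,2\}$-subgraph, hence $p:=|C_1|\le t$) and the rest; writing $u=\sum_{i\in C_1}x_i$, the computation $\lambda'(H,\vec x)=u+1-\sum_{i\in C}x_i^2\le u+1-u^2/p\le 2-\frac1p\le 2-\frac1t$ for $p\ge 2$, together with the crude bounds $\lambda'\le 1$ for $p=0$ and $\lambda'\le \frac54$ for $p=1$ (both below $2-\frac1t\ge\frac32$ when $t\ge2$), is sound. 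The paper's own technique for the analogous Theorems \ref{th2} and \ref{th4} is genuinely different: reduce to an extremal, left-compressed hypergraph (Lemma \ref{lem1}), then use the Lagrange-multiplier conditions of Lemma \ref{lem2.1} to show that a positive weight on vertex $t+1$ forces each of $x_1,\dots,x_t$ above an absolute constant, whence $\sum_{i=1}^t x_i>1$, a contradiction. That method requires the extra hypothesis that the maximum complete $\{1\}$-subgraph also has order $t$, plus a largeness condition on $t$; your argument needs neither and works for every $t\ge 2$, precisely because in the graph case ``every pair of support vertices lies in an edge'' already means the support induces a complete graph --- the feature that fails for $r\ge 3$ and forces the paper's heavier machinery.
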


In this paper, we give a
Motzkin-Straus type result to $\{1,r\}$-graphs.  For any hypergragh (graph) $G$, denote the number of
its edges by $e(G)$.

\begin{thm}\label{th2}
Let $H$ be a $\{1,r\}$-graph. If both the order of its maximum complete
$\{1,r\}$-subgraph  and  the order of its maximum complete
$\{1\}$-subgraph are $t$, where $\displaystyle {t\geq \lceil {[r(r-1)-1]^{r-2} \over [r(r-1)]^{r-3}}\rceil }$,  then,
\[\lambda' (H) = \lambda' \left( {{K_t}^{\{ 1,r\} }} \right) ={1+  \frac{\prod_{i=1}^{r-1} (t-i)}{t^{r-1}}}.\]
\end{thm}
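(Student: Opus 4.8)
The plan is to prove matching lower and upper bounds for $\lambda'(H)$, with the common value always equal to that produced by the uniform weighting on the $t$-clique. First, the lower bound: since the maximum complete $\{1,r\}$-subgraph of $H$ has order $t$, $H$ contains a copy of $K_t^{\{1,r\}}$, and putting weight $\tfrac1t$ on each of its $t$ vertices and $0$ elsewhere is a legal weighting of $H$ that realizes exactly $\lambda'(K_t^{\{1,r\}},\vec x)$. A direct computation (the $1$-edges contribute $t\cdot\tfrac1t=1$ and the $r$-edges contribute $r!\binom{t}{r}t^{-r}=\prod_{i=1}^{r-1}(t-i)/t^{r-1}$) gives the claimed value, so $\lambda'(H)\ge 1+\prod_{i=1}^{r-1}(t-i)/t^{r-1}$. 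The same computation, together with a symmetrization argument — for any two vertices of $K_t^{\{1,r\}}$ the restriction of $\lambda'(\cdot,\vec x)$ to shifting weight between them is a downward parabola symmetric about the equal-weight point, so equalizing weights never decreases $\lambda'$ — shows the uniform weighting is optimal for $K_t^{\{1,r\}}$, i.e. $\lambda'(K_t^{\{1,r\}})$ equals this value. It also shows that $f(t)=1+\prod_{i=1}^{r-1}(1-i/t)$ is strictly increasing in $t$, a fact I will use at the end.

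The substance is the upper bound $\lambda'(H)\le f(t)$. I would fix an optimal weighting $\vec y$ with the smallest possible support $T$ and write $\lambda'(H,\vec y)=\sum_{i\in V_1}y_i+r!\sum_{e\in E^r}\prod_{i\in e}y_i$, where $V_1$ is the set of $1$-edges; note $\sum_{i\in V_1}y_i\le 1$, and by hypothesis $|V_1|=t$ with $V_1$ coinciding with the $t$-clique $C$ (every vertex of a $\{1,r\}$-clique is a $1$-edge, and there are exactly $t$ such vertices). The first structural step is the usual Motzkin--Straus shift: for $i,j\in T$, moving weight $y_i\mapsto y_i+\delta$, $y_j\mapsto y_j-\delta$ changes $\lambda'(H,\vec y)$ by $B\delta-C\delta^2$ with $C=r!\sum_{e\supseteq\{i,j\}}\prod_{k\in e\setminus\{i,j\}}y_k\ge 0$. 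If $i$ and $j$ lie in no common $r$-edge then $C=0$, the perturbation is linear, and one can shift all weight onto one of them without decreasing $\lambda'$, contradicting minimality of $T$. Hence every pair of support vertices lies in a common $r$-edge.

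Next I would relocate weight from $T\setminus C$ onto $C$. Moving weight from a vertex $v\in T\setminus V_1$ onto the clique increases the $1$-part by the relocated amount (since $v$ is not a $1$-edge while the clique vertices are), while the change in the $r$-part is governed by the difference of partial derivatives $r!\sum_{e\ni\cdot}\prod$. The key estimate is that, because every $r$-edge through $v$ uses $r-1$ further support vertices and the $r$-part is a degree-$r$ form subject to $\sum y_i=1$, the total $r$-part that any single external vertex can carry is bounded, and this bound is beaten by the guaranteed gain in the $1$-part precisely once $t$ exceeds $\lceil\frac{[r(r-1)-1]^{r-2}}{[r(r-1)]^{r-3}}\rceil$. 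This is the main obstacle: the uniform Motzkin--Straus phenomenon fails for $r$-graphs (cf. the sharpness example after Theorem \ref{Peng2}), so I cannot simply invoke a clique bound on the $r$-part; instead the argument must quantitatively trade the unit available from the $1$-edges against the maximum possible $r$-edge density outside $C$, and the stated threshold on $t$ is exactly the break-even point of this trade — most transparently, of the single-extra-vertex competitor analogous to the Peng et al.\ example.

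Once every minimal-support optimal weighting is forced onto $C=V_1$, the hypergraph it sees is a subgraph of $K_{|T|}^{\{1,r\}}$ with $|T|\le t$, so by the symmetrization and monotonicity established in the first paragraph $\lambda'(H)=\lambda'(H,\vec y)\le\lambda'(K_{|T|}^{\{1,r\}})=f(|T|)\le f(t)$, matching the lower bound and completing the proof. I expect essentially all the difficulty to be concentrated in the weight-relocation estimate of the third paragraph, where the precise constant in the hypothesis on $t$ must be extracted; the remaining steps are the standard Motzkin--Straus toolkit adapted to the $\{1,r\}$ setting via Definition \ref{def4}.
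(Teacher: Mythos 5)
Your proposal gets the frame right (the lower bound via the uniform weighting on the clique, the identification $V_1 = C$ from the two hypotheses, the reduction to showing that a minimal-support optimal weighting lives on the clique), but the entire content of the theorem sits in the step you defer: your third paragraph announces a ``key estimate'' trading the $1$-part gain against the $r$-part carried by an external vertex, and never performs it. Worse, the mechanism you sketch cannot work as stated. At an optimal weighting $\vec y$, Lemma \ref{lem2.1} forces all partial derivatives over the support to be equal; since a clique vertex $u$ is a $1$-edge and an external vertex $v$ is not, this means the $r$-part derivative at $v$ exceeds that at $u$ by exactly $1$, so the ``guaranteed gain in the $1$-part'' from relocating weight $v\to u$ is cancelled at first order, and the second-order term of the shift, $-r!\,\lambda\left(E^r_{uv},\vec y\right)\delta^2$, is nonpositive. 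Relocation from an optimum can only decrease $\lambda'$, so no break-even argument of the form you describe can force the support onto $C$; the contradiction has to come from somewhere else. Your guess that the threshold on $t$ is the break-even point of a single-extra-vertex competitor is also not how the constant arises.

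The paper's route supplies exactly the two ingredients you are missing. First, it left-compresses $H$ (Lemma \ref{lem1}), observing that the hypothesis on the maximum $\{1\}$-clique is what keeps the conditions of the theorem invariant under compression; after compression $H^1=[t]$, $[t]^{(r)}\subseteq H^r$, and, crucially, $E^r_{(t+1)\setminus i}=\emptyset$ for every $i\le t$. Without this normalization the equal-derivative identity has an uncontrollable extra term $r!\,\lambda\left(E^r_{(t+1)\setminus i},\vec x\right)$ on the external vertex's side, which is why your sketch has nothing to push against. Second, with compression in hand, equality of derivatives at $i$ and $t+1$ reads $1+A+x_{t+1}C=x_iC$ with $A\ge 0$ and $C=r!\,\lambda\left(E^r_{i(t+1)},\vec x\right)\le r(r-1)(1-x_i-x_{t+1})^{r-2}$, giving $x_i\ge \frac{1}{C}+x_{t+1}$ and hence the per-vertex bound $x_i>\frac{[r(r-1)]^{r-3}}{[r(r-1)-1]^{r-2}}$ for \emph{every} $i\in[t]$. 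Summing over the $t$ clique vertices and invoking $t\geq \lceil [r(r-1)-1]^{r-2}/[r(r-1)]^{r-3}\rceil$ yields $\sum_{i=1}^t x_i>1$, contradicting legality; so $x_{t+1}=0$ and the support lies in $[t]$. This is where the stated threshold on $t$ actually comes from, and it is the argument your proposal would need to reconstruct.
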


Furthermore, for $\{1,3\}$-graph, we give a result as follows.

\begin{thm}\label{th1}
Let $H$ be a $\{1,3\}$-graph. If the order of its maximum complete $\{1,3\}$-subgraph is $t$, where $t\geq 5$,  $H^3$ contains a maximum complete $3$-graph of order $s$, where $s\geq t$, and the number of edges in $H^3$ satisfies $\binom{s}{3}\leq e(H^3)\leq \binom{s}{3}+\binom{t-1}{2}$, then,
\[\lambda' (H) = \lambda' \left( {{K_t}^{\{ 1,3\} }} \right) ={1+  \frac{(t-1)(t-2)}{t^2}}.\]
\end{thm}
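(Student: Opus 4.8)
The plan is to prove the two matching bounds $\lambda'(H)\ge 1+\frac{(t-1)(t-2)}{t^2}$ and $\lambda'(H)\le 1+\frac{(t-1)(t-2)}{t^2}$ separately. The lower bound is immediate: $H$ contains a complete $\{1,3\}$-subgraph on $t$ vertices, namely a copy of $K_t^{\{1,3\}}$, so $\lambda'(H)\ge\lambda'(K_t^{\{1,3\}})$, and evaluating the expression of Definition \ref{def3} at the uniform weighting $x_i=1/t$ gives $t\cdot\frac1t+6\binom t3\cdot\frac1{t^3}=1+\frac{(t-1)(t-2)}{t^2}$, as required.

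For the upper bound I would fix an optimal weighting $\vec x$ and decompose
$$\lambda'(H,\vec x)=\sum_{i\in H^1}x_i+\lambda'(H^3,\vec x).$$
The first summand is at most $\sum_i x_i=1$; the difficulty is that it equals $1$ only when all the weight sits on singletons, whereas the triple term $\lambda'(H^3,\vec x)$ is largest when the weight is spread over a maximum clique of $H^3$. I would first record, via Theorem \ref{Peng2}, that $\lambda'(H^3)=\lambda'([s]^{(3)})=\frac{(s-1)(s-2)}{s^2}$: the hypothesis $s\ge t$ gives $\binom{t-1}2\le\binom{s-1}2$, so $\binom s3\le e(H^3)\le\binom s3+\binom{t-1}2\le\binom s3+\binom{s-1}2$ and Theorem \ref{Peng2} applies with parameter $s$. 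This already yields the crude bound $\lambda'(H)\le 1+\frac{(s-1)(s-2)}{s^2}$, which is however too weak when $s>t$, since $\frac{(s-1)(s-2)}{s^2}$ is strictly increasing in $s$.

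The heart of the argument is therefore the trade-off between the two terms. I would reduce to a clean extremal configuration: a maximum $s$-clique $S$ of $H^3$ containing the $t$ singleton vertices $T$ of a maximal complete $\{1,3\}$-subgraph, with the remaining $s-t$ vertices of $S$ not being singletons. An exchange argument shows no weight should sit on singletons outside $S$, since their first-order contribution is only $1$, strictly below the multiplier at the candidate optimum. A symmetrization argument over the interchangeable vertices of $T$ and of $S\setminus T$ then lets me assume all vertices of $T$ carry a common weight $a$ and all vertices of $S\setminus T$ a common weight $b$, subject to $ta+(s-t)b=1$. Substituting and eliminating $a$, the quantity $\lambda'(H,\vec x)$ becomes an explicit cubic in $b$, and I would show it is maximized at $b=0$, i.e. at the uniform weighting on $T$, which returns exactly $1+\frac{(t-1)(t-2)}{t^2}$.

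The main obstacle is this single-variable optimization, and it is precisely where the hypothesis $t\ge5$ enters. Differentiating at $b=0$ produces $(s-t)\bigl(-1+\frac{6(t-1)}{t^2}\bigr)$, which is non-positive exactly when $t^2-6t+6\ge0$, that is when $t\ge5$; for $t\le4$ the uniform weighting on $T$ is not even a local maximum, explaining the threshold in the statement. Establishing that $b=0$ is the \emph{global} maximum on $0\le b\le 1/(s-t)$, rather than merely a local one, and making the reduction to the symmetric two-weight model fully rigorous in the presence of the up to $\binom{t-1}2$ extra edges allowed in $H^3$, is the technical core; the other endpoint $b=1/(s-t)$ is harmless, as it gives $\lambda'=\frac{(s-t-1)(s-t-2)}{(s-t)^2}<1$. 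Combining the two bounds then yields $\lambda'(H)=1+\frac{(t-1)(t-2)}{t^2}=\lambda'(K_t^{\{1,3\}})$.
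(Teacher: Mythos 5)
Your lower bound is correct, and your derivative computation correctly locates the threshold: the sign of $-1+\frac{6(t-1)}{t^2}=\frac{-(t^2-6t+6)}{t^2}$ is exactly where $t\ge 5$ enters (the paper's hard case culminates in the analogous first-order inequality). But the upper bound --- the actual content of the theorem --- has genuine gaps, which you yourself flag as ``the technical core'': the reduction to the symmetric two-weight model and the globality of the maximum at $b=0$ are never established, and neither follows from routine symmetrization. The function $\lambda'(H,\cdot)$ is a non-concave cubic, and the up to $\binom{t-1}{2}$ extra $3$-edges destroy the interchangeability of the vertices of $T$ (and of $S\setminus T$), so averaging over ``interchangeable'' vertices is not available. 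Worse, your clean model simply discards those extra edges, which is not legitimate for an upper bound: the paper's example with $e(H^3)=\binom{s}{3}+\binom{t-1}{2}+1$ shows that extra edges can push $\lambda'$ strictly above $1+\frac{(t-1)(t-2)}{t^2}$, so their contribution must be controlled, not ignored. Also unproven are your preliminary reductions: that the $t$ vertices of a maximum complete $\{1,3\}$-subgraph may be assumed to lie inside the maximum clique $S$ of $H^3$, and that no weight sits on vertices outside $S$ (asserted as ``an exchange argument''); note too that $H$ may have many more than $t$ singleton edges, as long as no $t+1$ of them span a complete $\{1,3\}$-subgraph, so identifying $H^1$ with $T$ is itself an assumption.

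For comparison, the paper avoids any global optimization. It assumes $H$ extremal among graphs satisfying the hypotheses, fixes an optimal weighting with support $[k]$, and splits into cases according to how $[k]$ meets the (unique) $K_s^{\{3\}}$: if $[k]$ lies inside its vertex set, Theorem \ref{th2} with $r=3$ applies; if $[k]$ misses it, or meets it in at most $t$ vertices, then only a small clique plus at most $\binom{t-1}{2}$ stray $3$-edges carry weight, and Theorem \ref{Peng2} bounds $\lambda'(H^3,\vec x)$; in the remaining case it first uses edge- and weight-exchange arguments to arrange $x_j\ge x_i$ and $\lambda(E^3_{j\backslash i},\vec x)\ge\lambda(E^3_{i\backslash j},\vec x)$ for all $j$ in the $K_t^{\{1,3\}}$ and $i$ in the clique outside it, and then invokes Lemma \ref{lem2.1} (equality of partial derivatives at an optimum) to deduce $x_j>\frac15+x_i$ for every such $j$, whence $\sum_j x_j>1$ once $t\ge5$ --- a contradiction. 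Your local computation is essentially this inequality in disguise, but turning it into a proof requires the paper's case analysis and exchange machinery (or an equally substantial substitute), which your sketch does not supply.
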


Notice that, if $r=3$, we require $t\geq 5$ in Theorems \ref{th2}
and \ref{th1}. In fact, for the case $t=3$ or $4$, it follows from the proof of Theorem \ref{th1},
Theorem \ref{th1} holds
when $s=t$. However, Theorem \ref{th1} fails to hold when
$t=3$ or $4$ and $s\geq t+1$. For $t=3$, $s\geq t+1$, let $G$ be the
$\{1,3\}$-graph with the vertex set $V(G)=[n]$ for some integer
$n\geq s$, and the edge set $E(G)=E^1\cup E^3$, where
$E^1=\{\{1\},\{2\},\{3\}\}$, $[s]^{(3)}\subseteq E^3$ and
$\binom{s}{3}\leq |E^3|\leq \binom{s}{3}+\binom{t-1}{2}$. Take a
legal weighting $\vec x = (x_1,\cdots, x_n)$, where $x_1 = x_2 = x_3
= 0.333$, $x_4=\cdots=x_s=\frac{0.001}{s-3}$,
$x_{s+1}=\cdots=x_n=0$, then $\lambda' \left( {G} \right) \ge
\lambda' \left( {{G},\vec x} \right) >{1+  \frac{(3-1)(3-2)}{3^2}}
=\lambda' \left( {{K_3}^{\{ 1,3\} }} \right)$.  This example also shows that Theorem \ref{th2} fails to hold when $t=3$ and $r=3$. For $t=4$, $s\geq
t+1$, let $G$ be a $\{1,3\}$-graph with the vertex set $V(G)=[n]$
for some integer $n\geq s$, and the edge set $E(G)=E^1\cup E^3$,
where $E^1=\{\{1\},\{2\}, \{3\},\{4\}\}$, $[s]^{(3)}\subseteq E^3$
and $\binom{s}{3}\leq |E^3|\leq \binom{s}{3}+\binom{t-1}{2}$. Take a
legal weighting $\vec x = (x_1,\cdots, x_n)$, where $x_1 = x_2 =
x_3=x_4 = 0.2498$, $x_5=\cdots=x_s=\frac{0.0008}{s-4}$,
$x_{s+1}=\cdots=x_n=0$, then $\lambda' \left( {G} \right) \ge
\lambda' \left( {{G},\vec x} \right) >{1+
\frac{(4-1)(4-2)}{4^2}}=\lambda' \left( {{K_4}^{\{ 1,3\} }}
\right)$.
 This example also shows that Theorem \ref{th2} fails to hold when $t=4$ and $r=3$.

The bound of $e(H^3)$ in Theorem \ref{th1} is necessary, and it is
also the best possible. When $e(H^3) =
\binom{s}{3}+\binom{t-1}{2}+1$, let $H$ be a $\{1,3\}$-graph with
the vertex set $[n]$ for some integer $n\geq s+1$, and the edge set
$E(H)=E^1\cup E^3$, where $E^1=\{\{1\},\cdots,\{ t\},\{ s+1\}\}$,
$E^3=\{{\left[ {s} \right]^{\left( 3 \right)}} \cup \left\{ {1t\left(
{s+1} \right)} \right\}\cup \{ {{i_1}{i_2}(s+1):{i_1}{i_2} \in
{{\left[ {t - 1} \right]}^{\left( 2 \right)}}} \} \}$. Then
 $[s+1]^{(3)}\nsubseteq E^3$, $|E^3|=
\binom{s}{3}+\binom{t-1}{2}+1$. Take a legal weighting $\vec x =
(x_1,\cdots, x_n)$, where $x_1 = x_2 =\cdots= x_{t-1} =
\frac{1}{t}$, $x_t=x_{s+1}=\frac{1}{2t}$ and the remaining
coordinates of $\vec x$ are equal to zero. Then $\lambda' \left( {H}
\right) \ge \lambda' \left( {{H},\vec x} \right) > \lambda' \left(
{{{\left[ {t} \right]}^{\left( 3 \right)}}} \right)$.

\section{Some preliminaries}
We will impose two additional conditions on any optimal legal weighting $\vec x = (x_1,\cdots, x_n)$ for an $R(H)$-graph $H$:

(i) $x_1\geq x_2 \geq\cdots \geq x_n \geq 0$,

(ii) $|\{j: x_j > 0\}|$ is minimal, i.e., if $\vec y$ is a legal weighting for $H$ satisfying $|\{j: y_j > 0\}|<|\{j: x_j > 0\}|$, then $\lambda' \left( {{H},\vec y} \right)<\lambda' \left( H \right)$.

Let $H=(V,E)$ be an $R(H)$-graph. For $r\in R(H)$, we will denote
the $(r-1)$-neighborhood of a vertex $i\in V$ by $E_i^r=\{A\in
V^{(r-1)}: A\cup \{i\} \in E^r\}$. Similarly, we  denote the
$(r-2)$-neighborhood of a pair of vertices $i, j \in V $ by
$E_{ij}^r=\{B\in V^{(r-2)}: B\cup \{i,j\} \in E^r\}$. We also denote
the complement of $E_i^r$ by $\overline E _i^r=\{A\in V^{(r-1)}:
A\cup \{i\} \in V^{(r)}\setminus E^r\}$, and define $\overline
E_{ij}^r=\{B\in V^{(r-2)}: B\cup \{i,j\} \in V^{(r)}\setminus
E^r\}$. For ease of notation, define $E_{i\setminus j}^r=E_i^r\cap
\overline E _j^r$. The following lemma gives some necessary
conditions of an optimal weighting for an $r$-graph $G$.

\begin{lem}\label{FR}\cite{FR}
Let $G = (V, E)$ be an $r$-graph and $\vec x = (x_1,\cdots, x_n)$ be
an optimal legal weighting for $G$ with $k (\leq n)$ positive
weights $x_1,\cdots, x_k$. Then for every $\{i, j \}\in [k]^{(2)}$,
(a) $\lambda(E_i^r , \vec x) =\lambda(E _j^r, \vec x) =
r\lambda(G)$, (b) there is an edge in $E$ containing both $i$ and
$j$.
\end{lem}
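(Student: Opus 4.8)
The plan is to derive both parts from first-order optimality, exploiting that $\lambda(G,\vec x)$ is a homogeneous polynomial of degree $r$ in the weights. The key computation is the partial derivative of the objective, which collapses onto the link of a vertex:
\[
\frac{\partial}{\partial x_i}\,\lambda(G,\vec x)
=\sum_{\substack{i_1 i_2\cdots i_r\in E\\ i\in\{i_1,\ldots,i_r\}}}\ \prod_{\ell:\,i_\ell\neq i}x_{i_\ell}
=\lambda(E_i^r,\vec x),
\]
because each edge through $i$ contributes the product of the weights of its other $r-1$ vertices, and those $(r-1)$-sets are precisely the members of $E_i^r$.

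For part (a) I would avoid the full Karush--Kuhn--Tucker formalism and argue by an admissible perturbation. Fix $i,j\in[k]$, so $x_i,x_j>0$, and consider the family obtained by replacing $x_i,x_j$ with $x_i+\epsilon,x_j-\epsilon$ while holding every other coordinate fixed; this is a legal weighting for all small $|\epsilon|$ since both coordinates are strictly positive and the sum is preserved. As $\vec x$ is optimal, the directional derivative vanishes at $\epsilon=0$, which by the identity above reads $\lambda(E_i^r,\vec x)-\lambda(E_j^r,\vec x)=0$. Hence all link values coincide on $[k]$; call the common value $\mu$. To pin down $\mu$ I would invoke Euler's relation for the degree-$r$ homogeneous function $\lambda(G,\vec x)$,
\[
\sum_{i}x_i\,\frac{\partial}{\partial x_i}\,\lambda(G,\vec x)=r\,\lambda(G,\vec x),
\]
whose left-hand side equals $\mu\sum_{i\in[k]}x_i=\mu$ (the weights outside $[k]$ vanish and the positive ones sum to $1$). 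Therefore $\mu=r\lambda(G)$, giving part (a).

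For part (b) I would argue by contradiction, assuming some pair $\{i,j\}\subseteq[k]$ lies in no common edge. Then no $(r-1)$-set in $E_i^r$ contains $j$ and none in $E_j^r$ contains $i$, so both $\lambda(E_i^r,\vec x)$ and $\lambda(E_j^r,\vec x)$ are free of the variables $x_i$ and $x_j$. Splitting edges by how they meet $\{i,j\}$ then yields
\[
\lambda(G,\vec x)=x_i\,\lambda(E_i^r,\vec x)+x_j\,\lambda(E_j^r,\vec x)+C,
\]
where $C$ sums over edges avoiding both $i$ and $j$ and is also free of $x_i,x_j$. Now transfer $j$'s weight onto $i$: put $y_i=x_i+x_j$, $y_j=0$, and $y_\ell=x_\ell$ otherwise. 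Since the three quantities in the decomposition are untouched by this move and $\lambda(E_i^r,\vec x)=\lambda(E_j^r,\vec x)$ by part (a), we obtain $\lambda(G,\vec y)=(x_i+x_j)\lambda(E_i^r,\vec x)+C=\lambda(G,\vec x)=\lambda(G)$. Thus $\vec y$ is still optimal yet has one fewer positive weight, contradicting the minimality condition (ii) that $\vec x$ satisfies.

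The derivative identity and Euler's relation are routine. The step needing the most care is the decomposition used in (b): I must check that the hypothesis of no common edge makes $\lambda(E_i^r,\vec x)$, $\lambda(E_j^r,\vec x)$, and $C$ simultaneously independent of $x_i$ and $x_j$, for it is exactly this that lets a single weight-shift preserve the objective with no loss. The genuine obstacle is conceptual rather than computational: closing part (b) requires feeding the equal-link conclusion of (a) into the global minimality hypothesis (ii), so that collapsing two vertices of equal link value onto one cannot decrease $\lambda$ and therefore must be forbidden by the assumed minimality of the support.
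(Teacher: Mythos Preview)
The paper does not supply its own proof of this lemma; it is quoted from Frankl and R\"odl \cite{FR} without argument. Your proof is correct and is essentially the classical one: equality of the link values follows from a two-variable perturbation (Lagrange-multiplier reasoning on the simplex), the common value is identified via Euler's identity for degree-$r$ homogeneous polynomials, and part (b) is obtained by the standard weight-shifting trick, which indeed requires the minimal-support hypothesis (ii) that the paper imposes just before stating the lemma. There is nothing further to compare.
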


Consider the non-uniform hypergraph $H$, with Lagrangian
$\lambda'(H)$, in \cite{Peng1}, Peng et al. gave a similar result
for an $R(H)$-graph.

\begin{lem}\label{lem2.1}\cite{Peng1}
If $x_1\geq x_2 \geq\ldots \geq x_k > x_{k+1}=x_{k+2}=\ldots =x_n=0$ and $\vec x = (x_1,\cdots, x_n)$ be an optimal legal weighting of a hypergraph $H$, then, $\frac{{\partial \lambda '\left( {H,\vec x} \right)}}{{\partial {x_1}}} = \frac{{\partial \lambda '\left( {H,\vec x} \right)}}{{\partial {x_2}}} =  \cdots  = \frac{{\partial \lambda '\left( {H,\vec x} \right)}}{{\partial {x_k}}}$, and for every $\{i, j \}\in [k]^{(2)}$, there is an edge in $E$ containing both $i$ and $j$.
\end{lem}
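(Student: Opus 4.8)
The plan is to follow the standard variational argument for Lagrangians, adapting the proof of the uniform analogue Lemma \ref{FR} to the mixed-degree polynomial $\lambda'(H,\vec x)$, and to make explicit use of the two conventions (i) and (ii) imposed on the optimal weighting at the start of this section. The structural fact I would isolate first is that, by Definition \ref{def3}, $\lambda'(H,\vec x)$ is \emph{multilinear} in $x_1,\dots,x_n$: each edge $F\in E$ contributes the monomial $|F|!\prod_{v\in F}x_v$, in which every variable appears to the first power because $F$ is a set. Consequently $\partial^2\lambda'/\partial x_i^2=0$ for every $i$, and a monomial of $\lambda'(H,\vec x)$ involves both $x_i$ and $x_j$ if and only if some edge of $H$ contains both $i$ and $j$. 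These two observations drive both assertions.

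For the equality of the partial derivatives I would fix a pair $i,j\in[k]$ and consider the one-parameter perturbation $\vec x\mapsto \vec x+\varepsilon(e_i-e_j)$, i.e. $x_i\mapsto x_i+\varepsilon$ and $x_j\mapsto x_j-\varepsilon$. This preserves $\sum_\ell x_\ell=1$ and stays in $S$ for all sufficiently small $|\varepsilon|$, precisely because $x_i,x_j>0$ for $i,j\in[k]$. Since $\vec x$ is optimal, the scalar function $\varepsilon\mapsto\lambda'(H,\vec x+\varepsilon(e_i-e_j))$ has an interior maximum at $\varepsilon=0$, so its derivative there vanishes; by the chain rule that derivative equals $\partial\lambda'/\partial x_i-\partial\lambda'/\partial x_j$. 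Letting $i,j$ range over $[k]$ gives $\partial\lambda'/\partial x_1=\cdots=\partial\lambda'/\partial x_k$.

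For the second assertion I would argue by contradiction using convention (ii). Suppose $i,j\in[k]$ lie in no common edge. By the multilinearity observation $\lambda'(H,\vec x)$ then contains no monomial with the factor $x_ix_j$, so along the same line $\vec x+\varepsilon(e_i-e_j)$ the function $\lambda'$ is affine in $\varepsilon$ with slope $\partial\lambda'/\partial x_i-\partial\lambda'/\partial x_j$, which is $0$ by the first part. Hence $\lambda'$ is constant along this line, and taking $\varepsilon=x_j$ slides all of the weight from $j$ onto $i$, producing a legal weighting $\vec x'$ with $\lambda'(H,\vec x')=\lambda'(H)$ but strictly fewer positive coordinates. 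This contradicts the minimality condition (ii), so every pair in $[k]$ must be covered by an edge.

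The argument is essentially routine; the point that needs care, and the main obstacle if it is overlooked, is that assertion (b) is genuinely false for an \emph{arbitrary} optimal weighting and really does depend on convention (ii). For example, if $H$ consists only of the singletons $\{1\},\{2\}$, then $\lambda'(H,\vec x)=x_1+x_2$ and $x_1=x_2=\tfrac12$ is optimal, yet $1$ and $2$ share no edge; only the minimal-support optimum $x_1=1$ restores the conclusion. I would therefore state at the outset that $\vec x$ is assumed to satisfy (i) and (ii), since the ``slide all the weight from $j$ onto $i$'' step is exactly the move that convention (ii) forbids, and that is what makes the contradiction work.
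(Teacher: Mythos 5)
Your proof is correct. Note, however, that the paper itself contains no proof of this lemma to compare against: it is quoted from the reference \cite{Peng1} (it is the non-uniform analogue of Lemma \ref{FR} of Frankl and R\"odl), so the only meaningful check is internal correctness, which your argument passes. The variational step (perturbing along $\vec x + \varepsilon(e_i - e_j)$ and using that an interior maximum forces $\partial\lambda'/\partial x_i = \partial\lambda'/\partial x_j$), and the multilinearity observation that makes $\lambda'$ affine along that line when $i$ and $j$ share no edge, are exactly the standard route; sliding all weight from $j$ to $i$ and invoking the minimality condition (ii) is the right way to get the covering assertion. Your remark that the second assertion genuinely requires condition (ii) --- with the counterexample $E=\{\{1\},\{2\}\}$, $x_1=x_2=\tfrac12$ --- is accurate and worth stating, since the lemma as phrased says only ``optimal legal weighting'' and relies on the conventions imposed at the start of Section 2.
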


In \cite{talbot},  Talbot introduced the definition of a left-compressed $r$-uniform hypergraph.  Let us generalize this concept to non-uniform hypergraphs.

Let $H=([n],E)$ be an $R(H)$-graph, where $n$ is a positive integer. For $e \in E$, and $i,j\in [n]$ with
$i<j$, then, define

\begin{equation*}
    {L_{ij}}\left( e \right)=
   \begin{cases}
   {(e\backslash \{ j\} ) \cup \{ i\} } &\mbox{if $i \notin e$ and $j\in e$,}\\
   e &\mbox{otherwise.}
   \end{cases}
  \end{equation*}
and
\begin{equation}\label{Lij}
\mathcal{L}_{ij}(E)=\{L_{ij}(e):e \in E\}\cup \{e:e,L_{ij}\left( e \right) \in E\}.
\end{equation}
Note that $|\mathcal{L}_{ij}(E)|=|E|$ from the definition of $\mathcal{L}_{ij}(E)$.

We say that $E$ ($H$) is \emph{left-compressed} if
$\mathcal{L}_{ij}(E)=E$ for every $1\leq i<j$.

\begin{lem}\label{lem1}
Let $H=([n],E)$ be an $R(H)$-graph,
$i,j\in [n]$ with $i<j$ and $\vec x = (x_1,\cdots, x_n)$ be an optimal legal weighting of $H$. Write $H_{ij}=([n],\mathcal{L}_{ij}(E))$. Then,
$$\lambda'(H, \vec x)\leq \lambda'(H_{ij}, \vec x).$$
\end{lem}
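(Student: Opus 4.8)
The plan is to evaluate both sides at the \emph{same} weighting $\vec x$ and compare the two Lagrangian polynomials monomial by monomial, exploiting the ordering $x_i\ge x_j$ that condition (i) guarantees for an optimal weighting whenever $i<j$. First I would observe that the operator $L_{ij}$ preserves the cardinality of every edge, and that the kept part $\{e:e,L_{ij}(e)\in E\}$ only ever compares edges of equal size; hence $(\mathcal{L}_{ij}(E))^r=\mathcal{L}_{ij}(E^r)$ for each $r\in R(H)$, and the compression acts independently on each level $H^r$. Since $\lambda'(\cdot,\vec x)$ is a sum over levels with nonnegative coefficients $r!$, it suffices to prove, for each fixed $r$, that $\sum_{e\in E^r}\prod_{v\in e}x_v\le \sum_{e\in \mathcal{L}_{ij}(E^r)}\prod_{v\in e}x_v$, and then recombine the levels.

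For a fixed level $r$, I would partition $E^r$ according to the membership of the pair $\{i,j\}$: let $A,B,C,D$ denote the edges containing, respectively, both $i$ and $j$; $i$ but not $j$; $j$ but not $i$; and neither. Only $C$ is moved by $L_{ij}$, which sends $e\in C$ to $e':=(e\setminus\{j\})\cup\{i\}$. Splitting $C=C_1\cup C_2$ according to whether $e'\in E^r$ or not, a careful unwinding of the definition $\mathcal{L}_{ij}(E)=\{L_{ij}(e):e\in E\}\cup\{e:e,L_{ij}(e)\in E\}$ shows that $A,B,D,C_1$ all survive unchanged, the swapped images $\{e':e\in C_1\}$ are already present (they lie in $B$), and the net effect is precisely to replace each $e\in C_2$ by its image $e'$. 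Thus $\mathcal{L}_{ij}(E^r)=A\cup B\cup D\cup C_1\cup\{e':e\in C_2\}$, to be compared with $E^r=A\cup B\cup D\cup C_1\cup C_2$. This set-theoretic bookkeeping is the one place requiring genuine care and is the main technical point: one must verify that the image set and the kept set overlap exactly on $A\cup B\cup D\cup C_1$ and that the swapped copies of $C_1$ create no phantom new edges, so that the only real substitution is $C_2\mapsto\{e'\}$.

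With this in hand the comparison is immediate. Writing $m_e=\prod_{v\in e\setminus\{j\}}x_v$ for $e\in C$, so that $\prod_{v\in e}x_v=x_j\,m_e$ and $\prod_{v\in e'}x_v=x_i\,m_e$, the difference of the two level-$r$ monomial sums is
$$\sum_{e\in \mathcal{L}_{ij}(E^r)}\prod_{v\in e}x_v-\sum_{e\in E^r}\prod_{v\in e}x_v=\sum_{e\in C_2}(x_i-x_j)\,m_e.$$
Every $m_e\ge 0$ since all weights are nonnegative, and $x_i-x_j\ge 0$ by condition (i) together with $i<j$, so each summand is nonnegative and the level-$r$ inequality holds. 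Multiplying by $r!$ and summing over $r\in R(H)$ then gives $\lambda'(H_{ij},\vec x)-\lambda'(H,\vec x)\ge 0$, which is exactly the claimed inequality.
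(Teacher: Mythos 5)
Your proof is correct and follows essentially the same route as the paper's: both identify the moved edges (those $e$ with $j\in e$, $i\notin e$, $L_{ij}(e)\notin E$, your $C_2$) and write the difference $\lambda'(H_{ij},\vec x)-\lambda'(H,\vec x)$ as a sum of terms $(x_i-x_j)\cdot\prod_{v\in e\setminus\{j\}}x_v\ge 0$, using $x_i\ge x_j$ from condition (i). The only cosmetic difference is that the paper treats the level $r=1$ separately via an indicator term, whereas your empty-product convention handles it uniformly.
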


\begin{proof}
If $1 \notin R(H)$, then,
$$\lambda'(H_{ij}, \vec x)-\lambda'(H, \vec x)=\sum\limits_{r \in R(H)} {\sum\limits_{\scriptstyle e \in E^r,{L_{ij}}\left( e \right) \notin E^r\hfill\atop
\scriptstyle i \notin e,j \in e\hfill} {\lambda '(e\backslash \{ j\} ,\vec x)\left( {{x_i} - {x_j}} \right)} } ,$$
and if $1 \in R(H)$, then,
$$\lambda'(H_{ij}, \vec x)-\lambda'(H, \vec x)=
\sum\limits_{\scriptstyle r \in R(H)\hfill\atop
\scriptstyle r \ge 2\hfill} {\sum\limits_{\scriptstyle e \in {E^r},{L_{ij}}\left( e \right) \notin {E^r}\hfill\atop
\scriptstyle i \notin e,j \in e\hfill} {\lambda '(e\backslash \{ j\} ,\vec x)\left( {{x_i} - {x_j}} \right)} }  + \left( {{x_i} - {x_j}} \right)I,$$
where $I$ satisfies that $I=1$, if $i\notin E^1$
$j\in E^1$, and otherwise $I=0$.
Hence $\lambda'(H_{ij}, \vec x)-\lambda'(H, \vec x)$ is
nonnegative in any case, since $i<j$ implies that
$x_i\geq x_j$. So this lemma holds.
\end{proof}

\section{Proof of Theorem \ref{th2}}
Applying the  theory of Lagrangian multipliers,
it is easy to get that an optimal weighting  $\vec{x}$ for ${K_t}^{\{ 1,r\} }$ is given by $x_i=1/t$ for each $i$, $1\le i\le t$. So $\lambda'({{K_t}^{\{ 1,r\} }}) ={1+  \frac{\prod_{i=1}^{r-1} (t-i)}{t^{r-1}}}
$.  So we only need to prove $\lambda' (H) = \lambda' \left(
{{K_t}^{\{ 1,r\} }} \right)$. Since ${K_t}^{\{ 1,r\}}\subseteq H$,
clearly, $\lambda' (H) \geq \lambda' \left( {{K_t}^{\{ 1,r\} }}
\right)$. Thus, to prove Theorem \ref{th2}, it suffices to prove
that $\lambda' (H) \leq \lambda' \left( {{K_t}^{\{ 1,r\} }}
\right)$. Denote
$\lambda'_{\{t,\{1,r\}\}} =max\{\lambda' (G):$ $G$ is a
$\{1,r\}$-graph, $G$ contains a maximum complete subgraph
$K_t^{\{1,r\}}$ and a maximum complete subgraph $K_t^{\{1\}}\}$.  If
$\lambda'_{\{t,\{1,r\}\}}\leq  \lambda' \left( {{K_t}^{\{ 1,r\} }}
\right) $, then
$\lambda'(H)\leq  \lambda' \left( {{K_t}^{\{ 1,r\} }}
\right)$. Hence we can assume $H$
is an extremal hypergraph, i.e., $\lambda'
(H)=\lambda'_{\{t,\{1, r\}\}}$.  If $H$ is not left-compressed, performing a sequence of left-compressing operations (i.e. replace $E$ by $\mathcal{L}_{ij}(E)$ if $\mathcal{L}_{ij}(E)\neq E$), we will get a left-compressed $\{1, r\}$-graph $H'$ with the same number of edges. The condition that the order of a maximum complete $\{1\}$-subgraph of $H$ is $t$ guarantees that
both the order of a maximum $\{1, r\}$ complete subgraph of $H'$  and the order of a maximum $\{1\}$ complete subgraph of $H'$  are still $t$. By Lemma \ref{lem1}, $H'$ is an extremal graph as well.  So we can assume
that the edge set of $H$ is left-compressed, $H^1=[t]$ and $[t]^{(r)}\subseteq H^r$. Let $\vec x =
(x_1,\cdots, x_n)$ be an optimal legal weighting for $H$, where
$x_1\geq x_2 \geq\ldots \geq x_k > x_{k+1}=x_{k+2}=\ldots =x_n=0$. If $k\le t$, then $\lambda'(H)\le \lambda'([k]^{\{1, r\}})\le \lambda'([t]^{\{1, r\}})$.  So it suffices to show that $x_{t+1}=0$.

Let $1\le i\le t$. If $x_{t+1}>0$, then by Lemma \ref{lem2.1}, there exists $e\in H^r$ such that $\{i, t+1\}\subset e$ and $\frac{{\partial \lambda '\left( {H,\vec x}\right)}}{{\partial {x_i}}}=\frac{{\partial \lambda '\left( {H,\vec x}\right)}}{{\partial {x_{t+1}}}}$.

Recall that $i \in E^1$ and $t+1\notin E^1$, then,
\[\frac{{\partial \lambda '\left( {H,\vec x} \right)}}{{\partial {x_i}}} = 1 + r!\lambda \left( {E^r_{i\backslash (t+1)} ,\vec x} \right)+r!x_{t+1}\lambda \left( {E^r_{i(t+1)} ,\vec x} \right),\]
\[\frac{{\partial \lambda '\left( {H,\vec x} \right)}}{{\partial {x_{t+1}}}} =r!x_i\lambda \left( {E^r_{i(t+1)} ,\vec x} \right).\]
Let $A=r!\lambda \left( {E^r_{i\backslash (t+1)} ,\vec x} \right)$,
 and $C=r!\lambda \left( {E^r_{i(t+1)} ,\vec x} \right)$.
Thus, $x_i\geq
\frac{1}{C}+x_{t+1}$, with $0<C \le r!{(1-x_i-x_{t+1})^{r-2} \over (r-2)!}$. So
\begin{equation}\label{eq2}
x_i> \frac{1}{r(r-1)(1-x_i-x_{t+1})^{r-2}}+x_{t+1}.
\end{equation}

The above inequality clearly  implies that $x_i>{1 \over r(r-1)}$. Combining this with (\ref{eq2}), we have
\begin{equation}\label{eq1}
x_i> { [r(r-1)]^{r-3} \over [r(r-1)-1]^{r-2} }.
\end{equation}
Recall that $\displaystyle {t\geq \lceil {[r(r-1)-1]^{r-2} \over [r(r-1)]^{r-3}}\rceil }$, with the aid of (\ref{eq1}),
$\sum\limits_{i = 1}^t {{x_i}}  > 1 $,
a contradiction to the definition of legal weighting vectors.
So $x_{t+1}=0$.
The proof is thus complete.  \qed

\section{Proof of Theorem \ref{th1}}
As shown in Theorem \ref{th2}, $\lambda'({{K_t}^{\{ 1,3\} }}) =1+
\frac{(t-1)(t-2)}{t^2}$. So we only need to prove $\lambda' (H) =
\lambda' \left( {{K_t}^{\{ 1,3\} }} \right)$. Since ${K_t}^{\{
1,3\}}\subseteq H$, clearly, $\lambda' (H) \geq \lambda' \left(
{{K_t}^{\{ 1,3\} }} \right)$. Thus, to prove Theorem \ref{th1}, it
suffices to prove that $\lambda' (H) \leq \lambda' \left( {{K_t}^{\{
1,3\} }} \right)={1+  \frac{(t-1)(t-2)}{t^2}}$. This time we denote
$\mu_{\{t,s,m,\{1,3\}\}} =max\{\lambda' (G):$ $G$ is a
$\{1,3\}$-graph, $G$ contains a maximum complete subgraph
$K_t^{\{1,3\}}$, $G^3$ contains a maximum clique of order $s$ and
$e(G^3)=m$, where $\binom{s}{3}\leq m\leq
\binom{s}{3}+\binom{t-1}{2}$\}. If $\mu_{\{t,s,m,\{1,3\}\}}\leq
1+\frac{(t-1)(t-2)}{t^2}$, then $\lambda'(H)\leq 1+
\frac{(t-1)(t-2)}{t^2}$. Hence we can assume $H$ is an extremal
hypergraph, i.e., $\lambda' (H)=\mu_{\{t,s,m,\{1,3\}\}}$. Let $\vec
x = (x_1,\cdots, x_n)$ be an optimal legal weighting for $H$, where
$x_1\geq x_2 \geq\ldots \geq x_k > x_{k+1}=x_{k+2}=\ldots =x_n=0$.
Note that if $k\leq t$, then $\lambda'(H,\vec x)\leq \sum\limits_{i
= 1}^k {{x_i}} +\lambda'([k]^{(3)},\vec x)\leq
1+\lambda'([k]^{(3)})=1+  \frac{(k-1)(k-2)}{k^2}\leq 1+
\frac{(t-1)(t-2)}{t^2}$. Also, if $s=t$, then from Theorem
\ref{Peng2} and Remark \ref{rem1}, $\lambda'(H,\vec
x)\leq\sum\limits_{i = 1}^k {{x_i}}+\lambda'(H^3,\vec x)\leq
1+\lambda'(H^3)= 1+\frac{(s-1)(s-2)}{s^2}=1+\frac{(t-1)(t-2)}{t^2}$.
So in the sequel, we assume $k\geq t+1$ and $s\geq t+1$.

Since $e(H^3)\leq \binom{s}{3}+\binom{t-1}{2}$, there is a unique $K_s^{\{3\}}$ in $H^3$, otherwise, if $H^3$ contains two different $K_s^{\{3\}}$, then $e(H^3)\geq \binom{s}{3}+\binom{s-1}{2}$, a contradiction to the range of
$e(H^3)$. Let $\{i_1,\ldots,i_s\}$ be the vertex set of that  unique $K_s^{\{3\}}$ in $H^3$. We can assume there exists a unique vertex set $\{j_1,\ldots,j_t\}\subseteq \{i_1,\ldots,i_s\}$ such that $\{j_1,\ldots,j_t\}$ induces a $K_t^{\{1,3\}}$ in $H$.
Otherwise, since $e(H^3)\leq \binom{s}{3}+\binom{t-1}{2}$, there is a $K_t^{\{1,3\}}$ whose vertex set consists of a vertex $a \notin \{i_1,\ldots,i_s\}$ and $t-1$ vertices from $\{i_1,\ldots,i_s\}$, denote these $t-1$ vertices by $b_1,\ldots, b_{t-1}$. Notice that this $K_t^{\{1,3\}}$ is the unique  $K_t^{\{1,3\}}$ in $H$. Then we
take one vertex $b$ from $\{i_1,\ldots,i_s\}\setminus \{b_1,\ldots, b_{t-1}\}$, add a new $1$-edge $\{b\}$ to $H$, we can see that the new ${\{1,3\}}$-graph $H'$ satisfies the conditions of Theorem \ref{th1}, and $\lambda'(H')\geq\lambda'(H)$ since $H\subset H'$, which implies that $H'$ is also an extremal hypergraph.  Hence we can assume that there exists a unique vertex set $\{j_1,\ldots,j_t\}\subseteq \{i_1,\ldots,i_s\}$ such that $\{j_1,\ldots,j_t\}$ induces a $K_t^{\{1,3\}}$ in $H$.
Note that any vertex in $\{i_1,\ldots,i_s\}\setminus \{j_1,\ldots,j_t\} $ is not a $1$-edge in $H$.

Consider the relationship between the set $[k]$ and $\{i_1,\ldots,i_s\}$, we have three cases.

\noindent{\bf Case 1.} $[k]\subseteq \{i_1,\ldots,i_s\}$.

Denote $H_0$ the $\{1,3\}$-subgraph induced by $[k]$ in $H$, then $\lambda'(H_0)=\lambda'(H_0, \vec x)=\lambda'(H)$. We can see that $H_0$ satisfies the conditions of Theorem \ref{th2} ($r=3$), thus $\lambda'(H_0)=\lambda' \left( {{K_t}^{\{ 1,3\} }} \right) ={1+  \frac{(t-1)(t-2)}{t^2}}$, so $\lambda'(H)=\lambda' \left( {{K_t}^{\{ 1,3\} }} \right) ={1+  \frac{(t-1)(t-2)}{t^2}}$.

\noindent{\bf Case 2.} $[k]\cap \{i_1,\ldots,i_s\}=\emptyset$.

In this case, there are at most $\binom{t-1}{2}$ $3$-edges contributing nonzero value to $\lambda'(H,\vec x)$.
Let $H_0^3$ be the subgraph induced by $[k]$ in $H^3$, then $e(H_0^3)\leq \binom{t-1}{2}$. By adding some $3$-edges to
$H_0^3$, we can find a $3$-graph $G$ such that $H_0^3\subset G$,
$K_t^{\{3\}}\subset G$, and $e(G)\leq \binom{t}{3}+\binom{t-1}{2}$, by Theorem \ref{Peng2} and Remark \ref{rem1}, $\lambda'(H_0^3)\leq\lambda'(G)=\frac{(t-1)(t-2)}{t^2}.$
Hence $\lambda'(H,\vec x)\leq1+\lambda'(H^3,\vec x)=1+\lambda'(H_0^3,\vec x)\leq1+\lambda'(H_0^3)\leq1+\lambda'(G)=1+\frac{(t-1)(t-2)}{t^2}.$

\noindent{\bf Case 3.} $[k]\cap \{i_1,\ldots,i_s\}\neq \emptyset$,
and $[k]\nsubseteq \{i_1,\ldots,i_s\}$.

Let $|[k]\cap \{i_1,\ldots,i_s\}|=p$, and we will prove the
claim below.
\begin{claim}\label{claim2}
$|\{j:j \in \{j_1,\ldots,j_t\}, x_j>0\}|=min \{p,t\}$.
\end{claim}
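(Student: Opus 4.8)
The plan is to prove the two inequalities separately; write $q=|\{j\in\{j_1,\ldots,j_t\}:x_j>0\}|$. The bound $q\le\min\{p,t\}$ is immediate, since the positive-weight $j$-vertices number at most $t$ and all lie in $[k]\cap\{i_1,\ldots,i_s\}$, a set of size $p$. For the reverse inequality $q\ge\min\{p,t\}$ I would argue by contradiction: if $q<\min\{p,t\}$, then comparing the $p$ positive-weight clique vertices with the $t$ special indices $j_1,\ldots,j_t$ shows that there must simultaneously exist a positive-weight vertex $v\in\{i_1,\ldots,i_s\}\setminus\{j_1,\ldots,j_t\}$ (so $v$ is not a $1$-edge) and an index $j_\ell$ with $x_{j_\ell}=0$ (so $j_\ell$ is a $1$-edge lying inside the clique). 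Ruling out this ``bad pair'' is the whole content of the claim.

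The natural tool is a weight transfer between the clique vertices $v$ and $j_\ell$. Because both lie in the unique $K_s^{\{3\}}$, every triple they span with two further clique vertices is an edge, so $v$ and $j_\ell$ have identical $3$-neighbourhoods inside the clique. Setting $x'_{j_\ell}=x_v$, $x'_{v}=0$ and leaving the other weights fixed, the net within-clique $3$-edge contribution is unchanged, one gains the $1$-edge term $x_v$, and the only terms that can move are the $3$-edges that leave the clique; a direct computation gives
\[
\lambda'(H,\vec x')-\lambda'(H,\vec x)=x_v\bigl(1+6(S_{j_\ell}-S_v)\bigr),
\]
where $S_w$ denotes the sum $\sum x_a x_b$ over pairs $\{a,b\}$ (with $a,b$ distinct from the other transferred vertex) for which $wab$ is an edge leaving the clique. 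If $H^3$ had no edge outside $\{i_1,\ldots,i_s\}$ then $S_{j_\ell}=S_v=0$ and this transfer would strictly increase $\lambda'$, contradicting optimality; so the entire difficulty is concentrated in these ``extra'' edges.

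The step I expect to be the main obstacle is that a purely local argument cannot finish: the stationarity conditions of Lemma \ref{lem2.1} force the partial derivatives at $v$ and $j_\ell$ to balance, which forces $S_v-S_{j_\ell}\ge\tfrac16$ and hence $\lambda'(H,\vec x')\le\lambda'(H,\vec x)$, so no infinitesimal or single-vertex move can help. To beat this I would invoke the hypothesis $e(H^3)\le\binom{s}{3}+\binom{t-1}{2}$, which leaves at most $\binom{t-1}{2}$ edges meeting a vertex outside the clique, together with the fact (again Lemma \ref{lem2.1}) that in Case 3 each positive-weight vertex outside the clique must share a ($3$-)edge with every other positive vertex, so its entire $3$-edge support is carried by these few extra edges. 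The plan is to show that, once $t\ge5$ and $s\ge t+1$, this bounded stock of extra edges cannot simultaneously join all the outside vertices to the rest and supply $v$ with the excess support $S_v-S_{j_\ell}\ge\tfrac16$ that stationarity demands; equivalently, one reattaches the surplus extra edges of $v$ to the starved $1$-edge $j_\ell$ to produce a competing $\{1,3\}$-graph with the same parameters $t,s,m$ but strictly larger Lagrangian, contradicting the extremality of $H$. Making this counting precise is the crux, and the earlier reductions to a single $K_s^{\{3\}}$ containing a single $K_t^{\{1,3\}}$ are what keep the bookkeeping manageable.
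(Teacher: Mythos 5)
You have the right skeleton---the easy direction, the existence of the bad pair $(v,j_\ell)$, and the correct observation that a pure weight transfer on the \emph{fixed} graph $H$ cannot work---but the proposal stops exactly where the content of Claim \ref{claim2} begins: your last paragraph declares the edge-reattachment step an imprecise ``crux'' to be settled by counting, and that counting route is both unnecessary and unlikely to work (with at most $\binom{t-1}{2}$ extra pairs among vertices of weight about $1/t$, the sum $S_v$ can easily exceed $1/6$, so no bound on the \emph{number} of extra edges can by itself refute $S_v-S_{j_\ell}\ge 1/6$). The paper's resolution requires no counting of this kind: relocate the edges and transfer the weight \emph{simultaneously}, and the stationarity obstruction you identified disappears identically. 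Concretely, let $E^3_S$ be the edge set of the unique $K_s^{\{3\}}$, let $A$ be the set of all $3$-edges of $E\setminus E^3_S$ containing $v$ but not $j_\ell$, let $A'$ be obtained from $A$ by replacing $v$ with $j_\ell$ in each edge, and put $H'=([n],(E\setminus A)\cup A')$ with your weighting $\vec{x}'$ (i.e.\ $x'_{j_\ell}=x_v$, $x'_v=0$). Every edge of $A'$ contributes exactly what its preimage in $A$ contributed, since the weight $x_v$ travels with the edges; the within-clique and untouched terms are unchanged; and the $1$-edge $\{j_\ell\}$ gains $x_v$. Hence $\lambda'(H')\ge\lambda'(H',\vec{x}')\ge\lambda'(H,\vec{x})+x_v>\lambda'(H)$, with no reference to $S_v-S_{j_\ell}$ at all: $S_v$ is not ``excess support to be refuted,'' it is simply carried over to $j_\ell$, and the optimality of $\vec{x}$ on $H$ (which is what forces $S_v-S_{j_\ell}\ge\tfrac16$) says nothing about the modified graph.

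The hypothesis $e(H^3)\le\binom{s}{3}+\binom{t-1}{2}$ enters only in the class-membership check that you mention just in passing: one must verify that $H'$ has the same number of $3$-edges, still has maximum $3$-clique of order $s$, and still has maximum complete $\{1,3\}$-subgraph of order exactly $t$, so that $\lambda'(H')>\lambda'(H)$ contradicts the extremality of $H$ in $\mu_{\{t,s,m,\{1,3\}\}}$. The last point is where the count is used: a $K_{t+1}^{\{1,3\}}$ in $H'$ could not contain any vertex of $\{i_1,\ldots,i_s\}\setminus\{j_1,\ldots,j_t\}$ (no such vertex is a $1$-edge, and the construction adds no $1$-edges), so it would force at least $\binom{s}{3}+\binom{t}{2}$ $3$-edges, exceeding $e(H'^3)\le\binom{s}{3}+\binom{t-1}{2}$. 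Two further inaccuracies in your write-up: neither $t\ge5$ nor $s\ge t+1$ plays any role in this claim (they are used later, in Subcase 3.2); and Lemma \ref{lem2.1} as stated applies only to vertices of positive weight, so it does not give ``balance'' of partial derivatives at the zero-weight vertex $j_\ell$---the inequality you want there follows from optimality of $\vec{x}$ directly, though, as above, it is ultimately beside the point.
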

\begin{proof}
Clearly, $|\{j:j \in \{j_1,\ldots,j_t\}, x_j>0\}|\le min \{p,t\}$.
If $|\{j:j \in \{j_1,\ldots,j_t\}, x_j>0\}|<min \{p,t\}$, then there exist two vertices $i,j$ such that $i\in \{i_1,\ldots,i_s\}\setminus \{j_1,\ldots,j_t\} $, $x_i>0$ and $j\in \{j_1,\ldots,j_t\}$, $x_j=0$. Denote $E^3_S$ the edge set of $K_s^{\{3\}}$ induced by
$\{i_1,\ldots,i_s\}$ in $H^3$. We construct a new ${\{1,3\}}$-graph $H'=([n],E')$, with $E'=(E\backslash A) \cup A'$, where $A$ is the edge set of all $3$-edges containing $i$ but not $j$ in $E\backslash E^3_S$, $A'$ is the edge set obtained from $A$ by replacing $i$ by $j$ for all $3$-edges in $A$. It is obvious that $|E'^3|=|E^3|$, $H'$ contains a $K_t^{\{1,3\}}$ and the order of maximum complete $3$-subgraph in $H'$ is still $s$, moreover,  we say that there is no $K_{t+1}^{\{1,3\}}$ in $H'$. Otherwise, there is a $K_{t+1}^{\{1,3\}}$ in $H'$, then the vertex
set of $K_{t+1}^{\{1,3\}}$ can not include vertices in $\{i_1,\ldots,i_s\}\setminus \{j_1,\ldots,j_t\}$, which indicates that there are at least $\binom{s}{3}+\binom{t}{2}$ $3$-edges in $H'$. Since $|E'^3|=|E^3|\leq\binom{s}{3}+\binom{t-1}{2}$, it is a contradiction. So the order of  maximum complete $\{1,3\}$-subgraph in $H'$ is still $t$.  We define a legal weighting $\vec {x'}$ for $H'$, such that $x'_l=x_l$, for $l\neq i, j$, and $x'_i=x_j=0$, $x'_j=x_i$. Then we can derive that $\lambda'(H',\vec {x'})- \lambda'(H,\vec x)\geq x_i>0$. This implies that $\lambda'(H')>\lambda'(H)$, a contradiction to the assumption of $H$.
\end{proof}
We still denote $H_0^3$ the subgraph induced by $[k]$ in $H^3$, and there are two subcases to consider.

{\bf Subcase 3.1.} $p\leq t$.

In this subcase, $H_0^3$ consists of a $K_p^{\{3\}}$ and at most $\binom{t-1}{2}$ other $3$-edges. Similarly to Case 2, by adding some $3$-edges, we can deduce that $\lambda(H_0^3)\leq\lambda(K_t^{\{3\}})$, then, $\lambda'(H^3,\vec x)=\lambda'(H_0^3,\vec x)\leq3!\lambda(H_0^3)\leq3!\lambda(K_t^{\{3\}})=
\frac{(t-1)(t-2)}{t^2}$,
so $\lambda'(H,\vec x)\leq1+\lambda'(H^3,\vec x)\leq 1+\frac{(t-1)(t-2)}{t^2}.$

{\bf Subcase 3.2.} $p\geq t+1$.

We  prove that we may assume for any $j\in  \{j_1,\ldots,j_t\}$, $i\in \{i_1,\ldots,i_s\}\setminus \{j_1,\ldots,j_t\}$,
\begin{equation}\label{eq3}
x_j\geq x_i,
\end{equation}
and
\begin{equation}\label{eq4}
\lambda \left( {E^3_{j\backslash i} ,\vec x} \right)\geq\lambda \left( {E^3_{i\backslash j} ,\vec x} \right)
\end{equation}
hold.

In fact, if $H$ dose not satisfy (\ref{eq3}) and (\ref{eq4}), through the following two steps, we will find a new $\{1,3\}$-graph $H^\ast$ and a new legal weighting vector $\vec {z}$ satisfying (\ref{eq3}) and (\ref{eq4}), and $H^\ast$ is an extremal hypergraph as well.

{\bf Step 1.} For every $i\in \{i_1,\ldots,i_s\}\setminus \{j_1,\ldots,j_t\}$, search for a vertex $j\in  \{j_1,\ldots,j_t\}$ satisfying  $E^3_{i\backslash j}\setminus E^3_{j\backslash i}\neq \emptyset$. If such a vertex exists (and if there is more than one such vertices, just take one of them), then for each $U\in E^3_{i\backslash j}\setminus E^3_{i\backslash j}$, replace
the $3$-edge $\{U\cup \{i\}\}$ by $\{U\cup \{j\}\}$. Check the value of $x_i$ and $x_j$, if $x_i>x_j$, then exchange the weight of these two vertices $i$, $j$.

Denote the new $\{1,3\}$-graph $H^\ast=([n],E^\ast)$ and the new legal weighting vector $\vec {y}$ obtained from Step 1.
We see that $|E^{\ast3}|=|E^3|$, the order of maximum complete $3$-subgraph in $H'$ is still $s$.  Similar to  the argument we used in Claim \ref{claim2}, there is no $K_{t+1}^{\{1,3\}}$ in $H^\ast$. Otherwise, there is a $K_{t+1}^{\{1,3\}}$ in $H^\ast$, then the vertex
set of $K_{t+1}^{\{1,3\}}$ can not include vertices in $\{i_1,\ldots,i_s\}\setminus \{j_1,\ldots,j_t\}$, which indicates that there are at least $\binom{s}{3}+\binom{t}{2}$ $3$-edges in $H^\ast$. Since $|E^{\ast 3}|=|E^3|\leq\binom{s}{3}+\binom{t-1}{2}$, it is a contradiction. So the order of  maximum complete $\{1,3\}$-subgraph in $H^\ast$ is still $t$.  Moreover, $H^\ast$ with  the weighting vector $\vec {y}$ satisfies (\ref{eq4}).

{\bf Step 2.} For every $i\in \{i_1,\ldots,i_s\}\setminus \{j_1,\ldots,j_t\}$ in $H^\ast$, search for a vertex $j\in  \{j_1,\ldots,j_t\}$ satisfying $y_i> y_j$ (if there are more than one such vertices, just take one of them). Then
exchange the weight of vertices $i$, $j$.

Denote the new legal weighting vector $\vec {z}$ for $H^\ast$ obtained after Step 2, then, clearly, $H^\ast$ with  the weighting vector $\vec {z}$ satisfies (\ref{eq3}) and (\ref{eq4}), besides, one can easily get that $\lambda'(H^\ast)\geq\lambda' (H^\ast,\vec z)\geq \lambda'(H, \vec x)=\lambda'(H)$. That implies $H^\ast$ is also an extremal hypergraph. Hence we can assume $H$ and its optimal weighting vector $\vec x$
satisfy that for any $j\in  \{j_1,\ldots,j_t\}$, $i\in \{i_1,\ldots,i_s\}\setminus \{j_1,\ldots,j_t\}$, (\ref{eq3}) and (\ref{eq4}) hold.

For any pair $i,j\in [k]$, if $i\in \{i_1,\ldots,i_s\}\setminus \{j_1,\ldots,j_t\}$, $j\in \{j_1,\ldots,j_t\}$, then,
\[\frac{{\partial \lambda '\left( {H,\vec x} \right)}}{{\partial {x_j}}} = 1 + 3!\lambda \left( {E^3_{j\backslash i} ,\vec x} \right)+3!x_i\lambda \left( {E^3_{ij} ,\vec x} \right),\]
\[\frac{{\partial \lambda '\left( {H,\vec x} \right)}}{{\partial {x_i}}} = 3!\lambda \left( {E^3_{i\backslash j} ,\vec x} \right)+3!x_j\lambda \left( {E^3_{ij} ,\vec x} \right).\]

Let $A=3!\lambda \left( {E^3_{j\backslash i} ,\vec x} \right)$,
$B=3!\lambda \left( {E^3_{i\backslash j} ,\vec x} \right)$,
$C=3!\lambda \left( {E^3_{ij} ,\vec x} \right)$. By Lemma \ref{lem2.1}, $1+A+x_iC=B+x_jC$.
With (\ref{eq4}), we have $A\geq B$, thus, $x_j>\frac{1}{C}+x_i$, with $0<C \le 6(1-x_i-x_j)$. So
\begin{equation}\label{eq5}
x_j\ge \frac{1}{6(1-x_i-x_j)}+x_i.
\end{equation}
The above inequality clearly  implies that $x_j>{1 \over 6}$. Combining this with (\ref{eq5}),
we have
\begin{equation}\label{eq111}
x_j> \frac{1}{5}+x_i.
\end{equation}
Since $p\geq t+1$, there exists a vertex $b\in[k]\cap \{i_1,\ldots,i_s\}\setminus \{j_1,\ldots,j_t\}$. If $t \geq 5$, then
$\sum\limits_{a \in E^1} {{x_a}} =\sum\limits_{a \in \{j_1,\ldots\,j_t\}} {{x_a}}>1+5x_b>1$, a contradiction to the definition of legal weighting vectors. Hence $t<5$,
which contradicts to the the condition $t \geq 5 $ in Theorem \ref{th1}.

Combining all these cases, the proof is thus complete.  \qed

\section{ Results for $\{1, r_2, \cdots, r_l\}$-graphs}

Applying similar method  used in the proof of Theorem \ref{th2}, we can obtain a result similar to Theorem \ref{th2} for  $\{1, r_2, \cdots, r_l\}$-graphs, where $l\ge 3$. Let us state this result.

\begin{thm}
Let $H$ be a $\{1, r_2, \cdots, r_l\}$-graph. If both the order of its maximum complete
$\{1, r_2, \cdots, r_l\}$-subgraph  and  the order of its maximum complete
$\{1\}$-subgraph are $t$, where $\displaystyle {t\geq f(r_2, \cdots, r_l)}$ for some function $f(r_2, \cdots, r_l)$,  then,
\[\lambda' (H) = \lambda' \left( {{K_t}^{\{ 1, r_2, \cdots, r_l\} }} \right).\]
\end{thm}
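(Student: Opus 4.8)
The plan is to transcribe the proof of Theorem \ref{th2} almost line for line, replacing the single edge type $r$ by a sum over the types $r_2,\ldots,r_l$. First I would record that, by the symmetry of $K_t^{\{1,r_2,\cdots,r_l\}}$, the uniform weighting $x_i=1/t$ ($1\le i\le t$) is optimal, so that
$$\lambda'\left(K_t^{\{1,r_2,\cdots,r_l\}}\right)=1+\sum_{m=2}^{l}\frac{\prod_{i=1}^{r_m-1}(t-i)}{t^{r_m-1}}.$$
Since $K_t^{\{1,r_2,\cdots,r_l\}}\subseteq H$ gives $\lambda'(H)\ge\lambda'(K_t^{\{1,r_2,\cdots,r_l\}})$ for free, only the reverse inequality remains. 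I would pass to an extremal $H$ and apply Lemma \ref{lem1} to left-compress it, so that $H^1=[t]$ and $[t]^{(r_m)}\subseteq H^{r_m}$ for every $m$; the hypothesis that the maximum complete $\{1\}$-subgraph has order $t$ keeps both invariants fixed under compression. Let $\vec x$ be an optimal weighting with positive support $[k]$. If $k\le t$, then restricting to $[k]$ realizes a sub-hypergraph of $K_k^{\{1,r_2,\cdots,r_l\}}$, and since each factor $\prod_{i=1}^{r_m-1}(1-i/k)$ is nondecreasing in $k$ the case closes; so it suffices to prove $x_{t+1}=0$.

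Next I would suppose $x_{t+1}>0$ and apply Lemma \ref{lem2.1} to the pair $(i,t+1)$ for each $1\le i\le t$. Summing the contribution of every edge type and using $i\in E^1$, $t+1\notin E^1$, the partials take the form
$$\frac{\partial\lambda'(H,\vec x)}{\partial x_i}=1+\sum_{m=2}^{l}r_m!\,\lambda\!\left(E^{r_m}_{i\backslash(t+1)},\vec x\right)+x_{t+1}\sum_{m=2}^{l}r_m!\,\lambda\!\left(E^{r_m}_{i(t+1)},\vec x\right),$$
$$\frac{\partial\lambda'(H,\vec x)}{\partial x_{t+1}}=x_i\sum_{m=2}^{l}r_m!\,\lambda\!\left(E^{r_m}_{i(t+1)},\vec x\right),$$
where left-compressedness guarantees $E^{r_m}_{(t+1)\backslash i}\subseteq E^{r_m}_{i\backslash(t+1)}$, so the neighborhood term dropped from the second line only helps. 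Writing $A=\sum_m r_m!\,\lambda(E^{r_m}_{i\backslash(t+1)},\vec x)\ge0$ and $C=\sum_m r_m!\,\lambda(E^{r_m}_{i(t+1)},\vec x)$, equating the two partials yields $(x_i-x_{t+1})C=1+A\ge1$, which forces $C>0$ and hence $x_i>1/C$.

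The heart of the matter is a uniform upper bound on $C$. For each $m$ the set $E^{r_m}_{i(t+1)}$ consists of $(r_m-2)$-subsets of $V\backslash\{i,t+1\}$, so $r_m!\,\lambda(E^{r_m}_{i(t+1)},\vec x)\le r_m(r_m-1)(1-x_i-x_{t+1})^{r_m-2}$, whence
$$C\le\sum_{m=2}^{l}r_m(r_m-1)(1-x_i-x_{t+1})^{r_m-2}\le\sum_{m=2}^{l}r_m(r_m-1)=:M,$$
using $0\le 1-x_i-x_{t+1}<1$ and $r_m\ge2$. This already gives $x_i>1/M$ for every $1\le i\le t$, so $\sum_{i=1}^t x_i>t/M$; taking $f(r_2,\cdots,r_l)=\sum_{m=2}^{l}r_m(r_m-1)$ makes this exceed $1$ once $t\ge f$, contradicting $\sum_{i=1}^t x_i\le1$ and forcing $x_{t+1}=0$. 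To match the sharper threshold of Theorem \ref{th2} I would bootstrap: feeding $x_i>1/M$ back as $1-x_i-x_{t+1}<1-1/M$ gives $C<\sum_m r_m(r_m-1)(1-1/M)^{r_m-2}=:M'$ and hence $x_i>1/M'$, which reduces to the exact $r$-graph bound when $l=2$.

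I expect the only genuine obstacle to be the bookkeeping in the two displayed partial derivatives: keeping the $\{1\}$-edge contribution (the additive $1$) separate from the higher-type contributions, splitting each type sum according to whether $t+1$ lies in the edge, and invoking left-compressedness to see that the term omitted from $\partial\lambda'/\partial x_{t+1}$ is dominated rather than ignored. Everything else, including the bootstrap, is a routine transcription of the single-type argument, since the two structural inputs (Lemma \ref{lem1} and Lemma \ref{lem2.1}) are already stated for arbitrary $R(H)$-graphs.
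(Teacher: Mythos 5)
Your proof is correct and takes essentially the same approach as the paper, which establishes this theorem only by noting that the argument of Theorem \ref{th2} (carried out in detail for $\{1,2,3\}$-graphs in Theorem \ref{th4}) generalizes: pass to an extremal left-compressed graph via Lemma \ref{lem1}, equate the partial derivatives at $i$ and $t+1$ via Lemma \ref{lem2.1}, bound $C\le\sum_{m=2}^{l}r_m(r_m-1)(1-x_i-x_{t+1})^{r_m-2}$, and derive the contradiction $\sum_{i=1}^{t}x_i>1$. Your explicit choice $f(r_2,\cdots,r_l)=\sum_{m=2}^{l}r_m(r_m-1)$, together with the bootstrap refinement (which for $l=2$ recovers the threshold of Theorem \ref{th2} and for $\{1,2,3\}$ gives $t\ge 8$ as in Theorem \ref{th4}), supplies exactly the formula for $f$ that the paper says could be given but omits.
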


A formula for function $f(r_2, \cdots, r_l)$ could be given directly. But we omit the details. Let us skip the proof of the above result and give a detail proof for $\{1, 2, 3\}$-graphs.

\begin{thm}\label{th4}
Let $H$ be a $\{1, 2, 3\}$-graph. If both the order of its maximum complete
$\{1,2, 3\}$-subgraph  and  the order of its maximum complete
$\{1\}$-subgraph are $t$, where $t\ge 8$,  then,
\[\lambda' (H) = \lambda' \left( {{K_t}^{\{ 1, 2, 3\} }} \right) ={1+  \frac{t-1}{t}+\frac{(t-1)(t-2)}{t^2}}.\]
\end{thm}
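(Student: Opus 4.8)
The plan is to follow the template of the proof of Theorem \ref{th2}, now carrying along both the $2$-edge and $3$-edge contributions simultaneously. First I would record that, by the method of Lagrange multipliers, the weighting $x_i=1/t$ for $1\le i\le t$ (and $0$ otherwise) is optimal for $K_t^{\{1,2,3\}}$, which gives $\lambda'(K_t^{\{1,2,3\}})=1+\frac{t-1}{t}+\frac{(t-1)(t-2)}{t^2}$. Since $K_t^{\{1,2,3\}}\subseteq H$, the bound $\lambda'(H)\ge\lambda'(K_t^{\{1,2,3\}})$ is immediate, so only the reverse inequality is at issue. As in Theorem \ref{th2}, I would pass to an extremal $\{1,2,3\}$-graph $H$ (maximising $\lambda'$ subject to both clique orders being $t$) and, using Lemma \ref{lem1}, assume $H$ is left-compressed with $E^1=[t]$, $[t]^{(2)}\subseteq H^2$ and $[t]^{(3)}\subseteq H^3$; the hypothesis on the maximum complete $\{1\}$-subgraph guarantees that compression keeps both clique orders equal to $t$. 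Fix an optimal weighting $\vec x$ with $x_1\ge\cdots\ge x_k>x_{k+1}=\cdots=x_n=0$. If $k\le t$ then $\lambda'(H)\le\lambda'([k]^{\{1,2,3\}})\le\lambda'([t]^{\{1,2,3\}})$ and we are done, so it suffices to prove $x_{t+1}=0$.

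Suppose for contradiction that $x_{t+1}>0$, and fix $i\in[t]$. By Lemma \ref{lem2.1} there is an edge containing both $i$ and $t+1$ and $\partial\lambda'/\partial x_i=\partial\lambda'/\partial x_{t+1}$. I would compute
$$\frac{\partial\lambda'}{\partial x_i}=1+2\lambda(E_i^2,\vec x)+6\lambda(E_i^3,\vec x),\qquad \frac{\partial\lambda'}{\partial x_{t+1}}=2\lambda(E_{t+1}^2,\vec x)+6\lambda(E_{t+1}^3,\vec x),$$
the extra $1$ arising because $i\in E^1$ while $t+1\notin E^1$. Splitting each neighborhood according to whether the edge also meets the other of $i,t+1$, I would write $\lambda(E_i^r,\vec x)=x_{t+1}\lambda(E_{i(t+1)}^r,\vec x)+D_r^{(i)}$ and $\lambda(E_{t+1}^r,\vec x)=x_i\lambda(E_{i(t+1)}^r,\vec x)+D_r^{(t+1)}$ for $r=2,3$, where $D_r^{(i)}$, $D_r^{(t+1)}$ collect the edges through $i$ (resp. $t+1$) avoiding the other vertex. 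Left-compression sends every edge through $t+1$ that avoids $i$ to an edge through $i$ that avoids $t+1$ with the same remaining vertices (hence equal weight), so $D_r^{(t+1)}\le D_r^{(i)}$. Subtracting the two partials and equating to zero then yields
$$(x_i-x_{t+1})\,C_i=1+2\bigl(D_2^{(i)}-D_2^{(t+1)}\bigr)+6\bigl(D_3^{(i)}-D_3^{(t+1)}\bigr)\ge 1,\qquad C_i:=2\lambda(E_{i(t+1)}^2,\vec x)+6\lambda(E_{i(t+1)}^3,\vec x).$$
Since $\lambda(E_{i(t+1)}^2,\vec x)\le 1$ and $\lambda(E_{i(t+1)}^3,\vec x)\le 1-x_i-x_{t+1}$, we have $C_i\le 8-6x_i-6x_{t+1}$, whence $(x_i-x_{t+1})(8-6x_i-6x_{t+1})\ge 1$. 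Expanding gives $8x_i-6x_i^2-8x_{t+1}+6x_{t+1}^2\ge 1$, and dropping the nonnegative term $8x_{t+1}-6x_{t+1}^2$ leaves $8x_i-6x_i^2>1$, that is $6x_i^2-8x_i+1<0$, so $x_i>\frac{4-\sqrt{10}}{6}\approx 0.1396$.

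Finally I would sum this lower bound over $i=1,\dots,t$. Each of the $t$ weights $x_1,\dots,x_t$ exceeds $\frac{4-\sqrt{10}}{6}$, and since $\frac{6}{4-\sqrt{10}}=4+\sqrt{10}\approx 7.16$, the inequality $t\ge 8$ gives $\sum_{i=1}^t x_i>t\cdot\frac{4-\sqrt{10}}{6}>1$, contradicting $\sum_i x_i=1$. Hence $x_{t+1}=0$, so $k\le t$ and the upper bound follows. I expect the main obstacle to be the bookkeeping in the neighborhood decomposition: unlike the $\{1,r\}$ case treated in Theorem \ref{th2}, here the $2$-edge and $3$-edge neighborhoods must be split at once, and one must verify that left-compression makes each ``private $t+1$'' sum no larger than its ``private $i$'' counterpart so that the aggregated difference retains the single $+1$ from the $1$-edge. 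It is this constant $\frac{4-\sqrt{10}}{6}$, rather than the structural argument, that pins the threshold to $t\ge 8$, since $7\cdot\frac{4-\sqrt{10}}{6}<1<8\cdot\frac{4-\sqrt{10}}{6}$.
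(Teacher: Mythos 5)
Your proposal is correct and follows essentially the same route as the paper: reduce to a left-compressed extremal $H$ with $H^1=[t]$, $[t]^{(2)}\subseteq H^2$, $[t]^{(3)}\subseteq H^3$, then show $x_{t+1}=0$ by equating $\partial\lambda'/\partial x_i$ and $\partial\lambda'/\partial x_{t+1}$ and deriving a lower bound on each $x_i$, $1\le i\le t$, that contradicts $\sum_i x_i=1$ once $t\ge 8$. Two differences are worth recording, both in your favor. First, the paper simply writes $\partial\lambda'/\partial x_{t+1}=x_iC$, silently discarding the contribution of edges through $t+1$ that avoid $i$; your decomposition with the terms $D_r^{(t+1)}$, bounded by $D_r^{(i)}$ via left-compression, is exactly the justification needed to make that step rigorous, so your write-up is more careful than the paper's at its one delicate point. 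Second, the numerical endgame differs: the paper bootstraps (from $C\le 8$ it gets $x_i>1/8$, then refines to $C\le 29/4$ and $x_i>4/29$), whereas you solve the quadratic $6x_i^2-8x_i+1<0$ to get $x_i>\frac{4-\sqrt{10}}{6}\approx 0.1396$; both constants exceed $1/8$, and both pin the threshold at $t\ge 8$ since $7\cdot\frac{4}{29}<1$ and $4+\sqrt{10}\approx 7.16$.
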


\noindent {\bf \em Proof of  Theorem \ref{th4}. }The proof is
similar to the proof of Theorem \ref{th2}.   Applying the  theory of
Lagrangian multipliers, it is easy to get that an optimal weighting
$\vec{x}$ for ${K_t}^{\{ 1, 2, 3\} }$ is given by $x_i=1/t$ for each
$i$, $1\le i\le t$. So $\lambda' \left( {{K_t}^{\{ 1, 2, 3\} }}
\right) ={1+  \frac{t-1}{t}+\frac{(t-1)(t-2)}{t^2}}$.
  So we only need to prove $\lambda' (H) = \lambda' \left(
{{K_t}^{\{ 1, 2, 3\} }} \right)$. Since ${K_t}^{\{ 1, 2, 3\}}\subseteq H$,
clearly, $\lambda' (H) \geq \lambda' \left( {{K_t}^{\{ 1, 2, 3\} }}
\right)$. Thus, to prove Theorem \ref{th4}, it suffices to prove
that $\lambda' (H) \leq \lambda' \left( {{K_t}^{\{ 1, 2, 3\} }}
\right)$. Denote
$\lambda'_{\{t,\{1, 2, 3\}\}} =max\{\lambda' (G):$ $G$ is a
$\{1, 2, 3\}$-graph, $G$ contains a maximum complete subgraph
$K_t^{\{1, 2, 3\}}$ and a maximum complete subgraph $K_t^{\{1\}}\}$.  If
$\lambda'_{\{t,\{1, 2, 3\}\}}\leq  \lambda' \left( {{K_t}^{\{ 1, 2, 3\} }}
\right) $, then
$\lambda'(H)\leq  \lambda' \left( {{K_t}^{\{ 1, 2, 3\} }}
\right)$. Hence we can assume $H$
is an extremal hypergraph, i.e., $\lambda'
(H)=\lambda'_{\{t,\{1, 2, 3\}\}}$.  If $H$ is not left-compressed, performing a sequence of left-compressing operations (i.e. replace $E$ by $\mathcal{L}_{ij}(E)$ if $\mathcal{L}_{ij}(E)\neq E$), we will get a left-compressed $\{1, 2, 3\}$-graph $H'$ with the same number of edges. The condition that the order of a maximum complete $\{1\}$-subgraph of $H$ is $t$ guarantees that
 both the order of a maximum $\{1, 2, 3\}$ complete subgraph of $H'$  and the order of a maximum $\{1\}$ complete subgraph of $H'$  are still $t$. By Lemma \ref{lem1}, $H'$ is an extremal graph as well.  So we can assume
that the edge set of $H$ is left-compressed, $H^1=[t]$, $[t]^{(2)}\subseteq H^2$ and $[t]^{(3)}\subseteq H^3$. Let $\vec x =
(x_1,\cdots, x_n)$ be an optimal legal weighting for $H$, where
$x_1\geq x_2 \geq\ldots \geq x_k > x_{k+1}=x_{k+2}=\ldots =x_n=0$. If $k\le t$, then $\lambda'(H)\le \lambda'([k]^{\{1, 2, 3\}})\le \lambda'([t]^{\{1, 2, 3\}})$.  So it suffices to show that $x_{t+1}=0$.

Let $1\le i\le t$. If $x_{t+1}>0$, then by Lemma \ref{lem2.1}, there exists $e\in E$ such that $\{i, t+1\}\subset e$ and $\frac{{\partial \lambda '\left( {H,\vec x}\right)}}{{\partial {x_i}}}=\frac{{\partial \lambda '\left( {H,\vec x}\right)}}{{\partial {x_{t+1}}}}$. Let $\lambda ( E^2_{i(t+1)} ,\vec x)=1$, if $i(t+1)\in E^2$, and let $\lambda ( E^2_{i(t+1)} ,\vec x)=0$, if $i(t+1)\notin E^2$. Recall that $i \in E^1$ and $t+1\notin E^1$,  then,
\begin{eqnarray*}
\frac{{\partial \lambda '\left( {H,\vec x} \right)}}{{\partial {x_i}}} &= &1 +2!\lambda \left( {E^2_{i\backslash (t+1)} ,\vec x} \right)+2!x_{t+1}\lambda \left( {E^2_{i(t+1)} ,\vec x} \right)+ 3!\lambda \left( {E^3_{i\backslash (t+1)} ,\vec x} \right)\\
&\null&+3!x_{t+1}\lambda \left( {E^3_{i(t+1)} ,\vec x} \right);
\end{eqnarray*}
\begin{eqnarray*}
\frac{{\partial \lambda '\left( {H,\vec x} \right)}}{{\partial {x_{t+1}}}} = 2!x_{i}\lambda \left( {E^2_{i(t+1)} ,\vec x} \right)+
3!x_i\lambda \left( {E^3_{i(t+1)} ,\vec x} \right).
\end{eqnarray*}
Let
$A=2!\lambda \left( {E^2_{i\backslash t+1} ,\vec x} \right)+ 3!\lambda \left( {E^3_{i\backslash t+1} ,\vec x} \right)$,
and $C=2!\lambda \left( {E^2_{i(t+1)} ,\vec x} \right)+3!\lambda \left( {E^3_{i(t+1)} ,\vec x} \right)$.
By Lemma \ref{lem2.1}, $1+A+x_{t+1}C=x_iC$.
Thus, $x_i\geq
\frac{1}{C}+x_{t+1}$, with $0<C \le 2+6(1-x_i-x_{t+1})$. Hence
\begin{equation}\label{eq22}
x_i> \frac{1}{2+6(1-x_i-x_{t+1})}+x_{t+1}.
\end{equation}

The above inequality clearly  implies that $x_i>{1 \over 8}$. Combining this with (\ref{eq22}), we have
\begin{equation}\label{eq11}
x_i> {4 \over 29 }.
\end{equation}
Recall that $t\ge 8$, with the aid of (\ref{eq11}),
$\sum\limits_{i = 1}^t {{x_i}}  > 1 $,
a contradiction to the definition of legal weighting vectors.
So $x_{t+1}=0$.
The proof is thus complete.  \qed

\end{document}